\tikzset{middlearrow/.style={
        decoration={markings,
            mark= at position 0.5 with {\arrow{#1}} ,
        },
        postaction={decorate}
    }
}
\tikzset{firstthirdarrow/.style={
        decoration={markings,
            mark= at position 0.33 with {\arrow{#1}} ,
        },
        postaction={decorate}
    }
}
\tikzset{secondthirdarrow/.style={
        decoration={markings,
            mark= at position 0.66 with {\arrow{#1}} ,
        },
        postaction={decorate}
    }
}
\definecolor{NoteColor}{rgb}{1,0,0}
\newcommand{\midwd}{\,\middle\vert\,}
\DeclareMathOperator{\rk}{rk}
\DeclareMathOperator{\Aut}{Aut}
\DeclareMathOperator{\Hom}{Hom}
\DeclareMathOperator{\diag}{diag}
\DeclareMathOperator{\Id}{Id}
\DeclareMathOperator{\Sym}{Sym}
\DeclareMathOperator{\Mat}{Mat}
\DeclareMathOperator{\Sp}{Sp}
\DeclareMathOperator{\GL}{GL}
\DeclareMathOperator{\SL}{SL}
\DeclareMathOperator{\PSL}{PSL}
\DeclareMathOperator{\OO}{O}
\DeclareMathOperator{\UU}{U}
\DeclareMathOperator{\SO}{SO}
\DeclareMathOperator{\Rep}{Rep}
\DeclareMathOperator{\Stab}{Stab}
\DeclareMathOperator{\spp}{\mathfrak{sp}}
\DeclareMathOperator{\End}{End}
\DeclareMathOperator{\Spin}{Spin}
\DeclareMathOperator{\Fix}{Fix}
\DeclareMathOperator{\KSp}{KSp}
\DeclareMathOperator{\Herm}{Herm}
\DeclareMathOperator{\Is}{Is}
\DeclareMathOperator{\fr}{fr}
\DeclareMathOperator{\SU}{SU}
\newcommand{\R}{\mathbb R}
\newcommand{\CC}{\mathbb C}
\newcommand{\HH}{\mathbb H}
\newcommand{\PP}{\mathbb P}
\newcommand{\N}{\mathbb N}
\newcommand{\K}{\mathcal K}
\newcommand{\Oc}{\mathbb O}
\newcommand{\T}{\mathcal T}
\newcommand{\Pp}{\mathcal P^+}
\newcommand{\Pm}{\mathcal P^-}
\newcommand{\F}{\mathcal F}
\renewcommand{\L}{\mathcal L}
\renewcommand{\T}{\mathcal T}
\newcommand{\coloneqq}{\mathrel{\mathop:}=}
\newcommand{\Ome}[1]
{\begin{pmatrix}
0 & #1\\
-#1 & 0
\end{pmatrix}}
\newcommand{\Om}{
\begin{pmatrix}
0 & 1\\
-1 & 0
\end{pmatrix}}
\newcommand{\Clm}[2]
{\begin{pmatrix}
#1\\
#2
\end{pmatrix}}
\DeclareMathOperator{\PIs}{\PP(\Is(\omega))}
\theoremstyle{plain}
\newtheorem{teo}{Theorem}[section]
\newtheorem{cor}[teo]{Corollary}
\newtheorem{lem}[teo]{Lemma}
\newtheorem{prop}[teo]{Proposition}
\newtheorem{fact}[teo]{Fact}
\theoremstyle{definition}
\newtheorem{df}[teo]{Definition}
\theoremstyle{remark}
\newtheorem{rem}[teo]{Remark}
\newcommand{\bs}{\smallsetminus}
\newcommand{\defin}{\emph}
\renewcommand{\emptyset}{\varnothing}
\begin{document}

\sloppy

\title{On a parametrization of spaces of maximal framed representations}

\author[Eugen Rogozinnikov]{Eugen Rogozinnikov} \address{Institut de Recherche Math\'ematique Avanc\'ee, Universit\'e de Strasbourg, 7 rue Ren\'e Descartes, 67000 Strasbourg, France}
\email{erogozinnikov@gmail.com}

\begin{abstract}
    This article is an extended version of the talk given by the author in the seminar Théorie Spectrale et Géométrie at the Institut Fourier in March 2022. We present some results of the PhD-thesis of the author~\cite{R-Thesis} extended by several results from the articles~\cite{AGRW, ABRRW, GRW}. We give a parametrization of the space of maximal framed representations of the fundamental group of a punctured surface into a Hermitian Lie group of tube type that can be seen as $\Sp_2(A,\sigma)$ for a Hermitian algebra $(A,\sigma)$. Using this parametrization, we count connected components of the space of maximal framed representations as well as the space of maximal (non-framed) representations.
\end{abstract}

\maketitle

\tableofcontents
\newpage 
\section{Introduction}

Higher Teichm\"uller theory  was developed as a generalization of classical Teichm\"uller theory that studies moduli spaces of complex structures on a fixed topological surface $S$ of negative Euler characteristic. This moduli space is called \emph{Teichm\"uller space} $\mathcal T(S)$, and it  can also be seen as the moduli space of marked complete hyperbolic structures on the surface $S$. Teichm\"uller space $\mathcal T(S)$ can be naturally embedded into the representation variety $\Hom(\pi_1(S),\PSL(2,\R))/\PSL(2,\R)$ as a connected component which consists entirely of discrete and faithful representations.

Higher Teichm\"uller theory generalizes this approach and studies representations of $\pi_1(S)$ into a reductive Lie group $G$ of higher rank. A \emph{higher Teichm\"uller space} is a subset of $\Rep(\pi_1(S),G):=\Hom(\pi_1(S),G)/G$ which is a union of connected components that consist entirely of discrete and faithful representations. There are two well-known families of Higher Teichm\"uller spaces: Hitchin components and spaces of maximal representations.

Hitchin components are defined when $G$ is a split real simple Lie group (e.g. $\SL(n,\R)$). The structure of Hitchin components is well-studied for closed surfaces~\cite{H92} as well as for surfaces with punctures and boundary components~\cite{FG}. In particular, their topology is well understood: every Hitchin component is homeomorphic to an open ball in a Euclidean vector space. Especially, they are contractible manifolds.

Maximal representations were introduced and studied in~\cite{BIW,BILW,Strubel}. The topology of spaces of maximal representation for closed surfaces was studied in~\cite{AC,CTT,Gothen} using the theory of Higgs bundles. In~\cite{AGRW}, the spaces of framed and decorated maximal representations of the fundamental group of a punctured surface into the real symplectic group $\Sp(2n,\R)$ are parametrized using a noncommutative analog of the Fock--Goncharov parametrization~\cite{FG} and the topology of them is studied. In~\cite{KR} maximal representations for punctured surfaces are studied from the point of view of the theory of spectral networks, the authors show that for maximal framed representations the partial abelianization procedure can be applied. In contrast to the Hitchin components, the topology of spaces of maximal representations is much more complicated. Nevertheless, as described before, Hitchin components and spaces maximal representations share many properties~\cite{BILW,L06}, and moreover, if $G=\PSL(2,\R)$, agree and coincide with the Teichm\"uller space $\mathcal T(S)$ \cite{WienhardICM}.


However, higher Teichm\"uller spaces do not exist for every Lie group $G$. Discrete and faithful representations form in general only a closed subset of $\Rep(\pi_1(S),G)$ but not connected components.

A new example of Lie groups which admit higher Teichm\"uller spaces is conjectured to be Lie groups with a notion of positivity. The theory of $\Theta$-positivity was developed by O.\;Guichard and A.\;Wienhard and generalizes Lusztig's total positivity for split real Lie groups and maximality for Hermitian Lie groups to a larger class of simple Lie groups (e.g. $\SO(p,q)$, $p\neq q$)~\cite{GW1,GLW,GW2}.

In this article we concentrate our attention on Hermitian Lie groups of tube type and maximal representations. Following~\cite{ABRRW} we introduce symplectic groups $\Sp_2(A,\sigma)$ for unital associative possibly noncommutative real algebras $A$ with an anti-involution $\sigma$, i.e. with a $\R$-linear map $\sigma\colon A\to A$ such that $\sigma(ab)=\sigma(b)\sigma(a)$ for all $a,b\in A$ and $\sigma^2=\Id$. We show that the for a special class of algebras, called Hermitian algebras, the group $\Sp_2(A,\sigma)$ is always Hermitian of tube type.

Further, following~\cite{AGRW} for an orientable punctured surface $S$ of negative Euler characteristic, we define the space $\Rep^{\fr}(\pi_1(S),G)$ of framed representations of $\pi_1(S)$ into $\Sp_2(A,\sigma)$, i.e. representation equipped with some additional structure around punctures called framing. Inside the space of framed representations we define the space of maximal framed representations $\Rep_+^{\fr}(\pi_1(S),G)$. We parametrize the space of maximal frames representation using ideal triangulations of $S$, i.e. triangulations of $S$ whose set of vertices agrees with the set of punctures of $S$. The following theorem is the main result of this article:

\begin{teo}\label{intro:main_thm}
Let $S$ be an orientable punctured surface $S$ without boundary of negative Euler characteristic $\chi(S)$. The space $\Rep_+^{\fr}(\pi_1(S),G)$ is homeomorphic to the following space:
$$((A^\sigma_+)^{-3\chi(S)}\times U_{(A,\sigma)}^{1-\chi(S)})/U_{(A,\sigma)}$$
where $A^\sigma_+=\{a^2\mid a\in A^\times,\;\sigma(a)=a\}$, $U_{(A,\sigma)}=\{a\in A^\times\mid \sigma(a)=a^{-1}\}$ and $U_{(A,\sigma)}$ acts by conjugation on every factor.
\end{teo}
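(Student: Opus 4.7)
The plan is to parametrize $\Rep_+^{\fr}(\pi_1(S),G)$ via an ideal triangulation $T$ of $S$, extracting edge invariants in $A^\sigma_+$ and loop invariants in $U_{(A,\sigma)}$, following the strategy of the AGRW parametrization for $\Sp(2n,\R)$ adapted to Hermitian algebras. An Euler characteristic count for a punctured surface without boundary (with all vertices of $T$ at the punctures) yields $-3\chi(S)$ edges and $-2\chi(S)$ triangles. The dual graph then has $-2\chi(S)$ vertices and $-3\chi(S)$ edges; a spanning tree uses $-2\chi(S)-1$ dual edges, so the complementary cotree has $1-\chi(S)$ dual edges, which equals the rank of the free group $\pi_1(S)$.

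For the forward direction, given a maximal framed representation $(\rho,\phi)$, lift $T$ to the universal cover and attach to each ideal vertex of $\tilde T$ the Lagrangian prescribed by $\phi$. For each edge $e$, with a chosen lift carrying endpoint Lagrangians $L_p,L_q$, bring $(L_p,L_q)$ into a fixed standard transverse pair via an element of $\Sp_2(A,\sigma)$; the induced invariant $c_e$ sits in $A^\sigma$ and, by maximality, in fact in $A^\sigma_+$. This produces the first factor $(A^\sigma_+)^{-3\chi(S)}$. Next, fix a root triangle $t_0$ and propagate the standardization along the spanning tree: each tree edge forces a unique normalization of the next triangle, so no additional data is recorded. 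Along each cotree dual edge, however, two independently standardized triangular frames meet along a shared edge; the mismatch lies in the stabilizer of the standardized Lagrangian pair that also preserves the edge invariant, and this stabilizer is canonically identified with $U_{(A,\sigma)}$ (for $c_e=1$ it is directly $\{g : g\sigma(g)=1\}$, and the general case is conjugate to this). This yields one element of $U_{(A,\sigma)}$ per cotree edge, producing the second factor $U_{(A,\sigma)}^{1-\chi(S)}$. The initial standardization at $t_0$ is well-defined only up to $U_{(A,\sigma)}$, which acts on every coordinate simultaneously by conjugation, so the forward map descends to the claimed quotient.

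The inverse reverses the recipe: start from a standard configuration at $t_0$, inductively place the third vertex of each tree-adjacent triangle using the corresponding $c_e \in A^\sigma_+$, and realize the deck transformation associated to a loop through the $\alpha$-th cotree edge as the tree-path transport twisted by $u_\alpha \in U_{(A,\sigma)}$. The principal obstacle is verifying that this recipe yields a well-defined homomorphism $\rho\colon \pi_1(S) \to \Sp_2(A,\sigma)$ and that the vertex Lagrangians are $\rho$-equivariant. Since $\pi_1(S)$ is free of rank $1-\chi(S)$, no relations need to be checked, so every combination of edge and cotree parameters determines a unique framed representation; injectivity up to the quotient follows because the sole ambiguity in the forward map is the choice of root standardization, and continuity of both maps is apparent from their explicit formulas. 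Together these steps establish the asserted homeomorphism.
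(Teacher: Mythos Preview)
Your overall strategy matches the paper's: the spanning-tree/cotree decomposition of the dual graph is exactly the choice of a connected fundamental polygon $S_0$ together with its boundary identifications, and the parameter count is the same. Two steps in your write-up, however, are not correct as stated and need the mechanism the paper actually uses.

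\textbf{(a) The edge invariant does not come from the pair.} You write that one brings $(L_p,L_q)$ to a standard transverse pair and reads off $c_e$. But $\Sp_2(A,\sigma)$ acts transitively on transverse pairs of isotropic lines (Proposition~\ref{stab2_A}), so a pair carries no invariant at all. The element $c_e\in A^\sigma_+$ is the invariant of the positive \emph{quadruple} formed by the two triangles adjacent to $e$: normalize one triple to $(\ell^+,\ell^-,\ell^1)$ and the fourth vertex lands at $\ell(c_e)=(1,-c_e)^TA$ (Proposition~\ref{act_pos_quadr}). Relatedly, your claim that ``each tree edge forces a \emph{unique} normalization of the next triangle'' is only true once you fix a specific edge-crossing rule; the paper uses the explicit map $\begin{pmatrix}0 & \sqrt{c_e^{-1}}\\ -\sqrt{c_e} & 0\end{pmatrix}$, and the existence of the square root in $A^\sigma_+$ is what makes this canonical.

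\textbf{(b) The cotree mismatch is not in a stabilizer, and is not a priori in $U_{(A,\sigma)}$.} The gluing across a cotree edge swaps $\ell^+$ and $\ell^-$, so it lies in $\L\cdot\omega$ with $\L=\Stab_G(\ell^+,\ell^-)\cong A^\times$, not in $U_{(A,\sigma)}$. What reduces this to $U_{(A,\sigma)}$ is the polar decomposition $A^\times \cong A^\sigma_+\times U_{(A,\sigma)}$ for Hermitian algebras: writing $l_e=\diag(u_eb_e,\,u_eb_e^{-1})$ with $b_e\in A^\sigma_+$ and $u_e\in U_{(A,\sigma)}$, the symmetric part $b_e$ \emph{is} the edge invariant for that cotree edge (equivalently, $x\sigma(x)=c_e^{-1}$ pins $x$ to a $U_{(A,\sigma)}$-coset). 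Your phrase ``stabilizer of the standardized Lagrangian pair that also preserves the edge invariant'' describes a coset, not a subgroup, and the identification with $U_{(A,\sigma)}$ is precisely this polar decomposition, which you should invoke explicitly.
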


Using this result, we count connected components of spaces of maximal representations for classical Hermitian Lie groups of tube type.

{\bf Structure of the paper:}
In Section~\ref{sec:symp_gr} we introduce involutive associative algebras and Hermitian algebras $(A,\sigma)$ and define the symplectic group $\Sp_2(A,\sigma)$.
In Section~\ref{sec:is_islines} we introduce the space of isotropic lines and study the action of $\Sp_2(A,\sigma)$ on it.
In Section~\ref{sec:top_data} we introduce punctured surfaces, ideal triangulations and some special kind of graphs on surfaces associated to triangulations that we are using later.
In Section~\ref{sec:framed_reps}, we introduce the spaces of framed representations and local systems on graphs and describe the connection between them.
In Section~\ref{section:parametrization} we prove Theorem~\ref{intro:main_thm}, apply it to understand the homotopy type of the space of maximal framed representations and count its connected components.

{\bf Acknowledgements.} I would like to express my gratitude to my research advisers Anna~Wienhard and Daniele~Alessandrini, and to my collaborators Arkady~Berenstein, Olivier~Guichard, and Vladimir~Retakh. This article is essentially based on our common papers~\cite{AGRW,ABRRW,GRW}. I thank Vladimir~Fock, Michael~Gekhtman, Michael~Shapiro and Marta Magnani for helpful and interesting discussions about some aspects of this article. The author acknowledges funding by the Deutsche Forschungsgemeinschaft within the RTG 2229 ``Asymptotic invariants and limits of groups and spaces'' and the Priority Program SPP 2026 ``Geometry at Infinity'', and thanks the Labex IRMIA of the Universit\'e de Strasbourg for support during the preparation of this article.

\section{Symplectic groups over noncommutative algebras}\label{sec:symp_gr}

\subsection{Involutive algebras and Hermitian algebras} In this section we recall the definition of a Hermitian algebra and state some properties of it that will be important for us later. For more detailed discussion of involutive and Hermitian algebras, we refer to~\cite[Section~2]{ABRRW}.

Let $A$ be a unital associative possibly noncommutative finite dimensional semisimple $\R$-algebra.

\begin{df}\label{def:antiinv}
An \defin{anti-involution} on $A$ is a $\R$-linear map $\sigma\colon A\to A$ such that
\begin{itemize}
\item $\sigma(ab)=\sigma(b)\sigma(a)$;
\item $\sigma^2=\Id$.
\end{itemize}
An \defin{involutive $\R$-algebra} is a pair $(A,\sigma)$, where $A$ is a $\R$-algebra as above and $\sigma$ is an anti-involution on $A$.
\end{df}

\begin{rem}
Sometimes in the literature the maps that satisfy  Definition~\ref{def:antiinv} are called just involutions. We add the prefix ``anti'' in order to emphasise that they exchange the factors.
\end{rem}

\begin{df} An element $a\in A$ is called \defin{$\sigma$-normal} if $\sigma(a)a=a\sigma(a)$. An element $a\in A$ is called \defin{$\sigma$-symmetric} if $\sigma(a)=a$. An element $a\in A$ is called \defin{$\sigma$-anti-symmetric} if $\sigma(a)=-a$. We~denote:
$$A^{\sigma}:=\Fix_A(\sigma)=\{a\in A\mid \sigma(a)=a\},$$
$$A^{-\sigma}:=\Fix_A(-\sigma)=\{a\in A\mid \sigma(a)=-a\}.$$
\end{df}

\begin{rem}
Since the algebra $A$ is unital, we always have the canonical copy of $\R$ in $A$, namely $\R\cdot 1$ where $1$ is the unit of $A$. We always identify $\R\cdot 1$ with $\R$. Moreover, since $\sigma$ is linear, for all $k\in \R$, $\sigma(k\cdot 1)=k\sigma(1)=k\cdot 1$, i.e. $\R\cdot 1\subseteq A^\sigma$. Involutive algebras with $A^\sigma=\R\cdot 1$ are called \defin{thin}.
\end{rem}

We denote by $A^\times$ the group of all invertible elements of $A$. If $V \subset A$ is a vector subspace, we denote $V^\times = A^\times \cap V$
the set of invertible elements in $V$.

\begin{df}
The closed subgroup $$U_{(A,\sigma)}=\{a\in A^\times\mid \sigma(a)=a^{-1}\}$$ of $A^\times$ is called the \defin{unitary group} of $A$. The Lie algebra of $U_{(A,\sigma)}$ agrees with $A^{-\sigma}$.
\end{df}

\begin{df}
\label{df:cone}
Let $(A,\sigma)$ be an involutive algebra. We define the set of $\sigma$-positive elements by
$$A^\sigma_+:=\left\{ a^2  \mid a\in (A^\sigma)^\times\right\},$$
and the set of $\sigma$-non-negative elements by
$$A^\sigma_{\geq 0}:=\left\{ a^2 \mid a\in A^\sigma\right\}.$$
\end{df}

The following definition of a Hermitian algebra will be one of key notions in this paper.

\begin{df}\label{Herm_A}
An involutive algebra $(A,\sigma)$ is called \defin{Hermitian} if for all $x,y\in A^\sigma$, $x^2+y^2=0$ implies $x=y=0$.
\end{df}

For further considerations the following definition will be important:
\begin{df}
Let $V$ be an $\R$-vector space. A subset $\Omega\subseteq V$ is called a \defin{cone} in $V$ if $\lambda x\in \Omega$ for all $\lambda\in (0,\infty)$, $x\in\Omega$. A cone $\Omega$ is called \defin{convex} if $x+y\in\Omega$ for all $x,y\in\Omega$. A cone $\Omega$ is called \defin{proper} if it does not contain affine lines.
\end{df}

\begin{rem}[{\cite[Section~2]{ABRRW}}]
Let $(A,\sigma)$ be a Hermitian algebra. The spaces $A^\sigma_{\geq 0}$ and $A^\sigma_+$ are proper convex cones in $A^\sigma$. The space $A^\sigma_{\geq 0}$ is the topological closure of $A^\sigma_+$. The space $A^\sigma_+$ is the open core of $A^\sigma_{\geq 0}$ in $A^\sigma$. The group $U_{(A,\sigma)}$ is compact, and it acts on $A^\sigma_+$ and on $A^\sigma_{\geq 0}$ by conjugation.
\end{rem}

\begin{df}
Let $x\in A$ and $a\in A^\times$. The element $ax\sigma(a)$ is called \defin{congruent} to $x$ by $a$. The group $A^\times$ acts on $A$ by \defin{congruence} preserving $A^\sigma$ and $A^\times$.
\end{df}

Notice that the restriction of the action by congruence to $U_{(A,\sigma)}<A^\times$ agrees with the action by conjugation of $U_{(A,\sigma)}$.

\begin{rem}[{\cite[Corollary~2.65]{ABRRW}}]
Let $(A,\sigma)$ be a Hermitian algebra. The action of $A^\times$ by congruence preserves $A^\sigma_{\geq 0}$ and $A^\sigma_{+}$.
\end{rem}

The following involutive algebras provide examples of Hermitian algebras:
\begin{enumerate}
\item Let $A=\Mat(n,\R)$ be the space of real $n\times n$-matrices and $\sigma(r):=r^T$ for $r\in A$. Then $A^\sigma=\Sym(n,\R)$ is the space of symmetric matrices. The algebra $(A,\sigma)$ is Hermitian with $A^{\sigma}_+=\Sym^+(n,\R)$ real symmetric positive definite matrices.

\item Let $A=\Mat(n,\CC)$ be the space of complex $n\times n$-matrices and $\bar\sigma(r):=\bar r^T$ for $r\in A$. Then $A^{\bar\sigma}=\Herm(n,\CC)$ is the space of complex Hermitian matrices. The algebra $(A,\bar\sigma)$ is Hermitian with $A^{\bar\sigma}_+=\Herm^+(n,\CC)$ complex Hermitian positive definite matrices.

\item Let $A=\Mat(n,\HH)$ be the space of quaternionic $n\times n$-matrices and $\bar\sigma(r):=\bar r^T$ for $r\in A$. Then $A^{\bar\sigma}=\Herm(n,\HH)$ is the space of quaternionic Hermitian matrices. The algebra $(A,\bar\sigma)$ is Hermitian with $A^{\bar\sigma}_+=\Herm^+(n,\HH)$ quaternionic Hermitian positive definite matrices.
\end{enumerate}

\subsection{Sesquilinear forms on \texorpdfstring{$A$}{A}-modules and their groups of symmetries}

Let $(A,\sigma)$ be an involutive algebra.

\begin{df}\label{osp}
A \defin{$\sigma$-sesquilinear form} $\omega$ on a right $A$-module $V$ is a map
$$\omega\colon V\times V\to A$$
such that for all $x,y,z\in V$ and for all $r_1,r_2\in A$
$$\omega(x+y,z)=\omega(x,z)+\omega(y,z),$$
$$\omega(x,y+z)=\omega(x,y)+\omega(x,z),$$
$$\omega(x_1r_1,x_2r_2)=\sigma(r_1)\omega(x_1,x_2)r_2.$$

We denote by $$\Aut(\omega):=\{f\in\Aut(V)\mid \forall x,y\in V:\omega(f(x),f(y))=\omega(x,y)\}$$ the group of symmetries of $\omega$. We also define the corresponding Lie algebra:
$$\End(\omega):=\{f\in\End(V)\mid \forall x,y\in V:\omega(f(x),y)+\omega(x,f(y))=0\}$$
with the usual Lie bracket $[f,g]=fg-gf$.
\end{df}

We now set $V=A^2$. We view $V$ as the set of columns and endow it with the structure of a right $A$-module. The anti-involution $\sigma$ extends to an involutive map on $A^2$ and on the space $\Mat_2(A)$ of $2\times2$-matrices with coefficients in $A$ componentwisely.

\begin{df}$ $
We make the following definitions:
\begin{enumerate}
\item A pair $(x,y)$ for $x,y\in A^2$ is called a \defin{basis} of $A^2$ if for every $z\in A^2$ there exist $a,b\in A$ such that $z=xa+yb$.
\item The element $x\in A^2$ is called \defin{regular} if there exists $y\in A^2$ such that $(x,y)$ is a basis of $A^2$.
\item $l\subseteq A^2$ is called a \defin{line} if $l=xA$ for a regular $x\in A^2$. We denote the space of lines of $A^2$ by $\PP(A^2)$.
\item Two regular elements $x,y\in A^2$ are called \defin{linearly independent} 
if $(x,y)$ is a basis of $A^2$.
\item Two lines $l,m$ are called \defin{transverse} if $l=xA$, $m=yA$ for linearly independent $x,y\in A^2$.
\item An element $x\in A^2$ is called \defin{isotropic} with respect to $\omega$ if $\omega(x,x)=0$. The set of all isotropic regular elements of $(A^2,\omega)$ is denoted by $\Is(\omega)$.
\item A line $l$ is called isotropic if $l=xA$ for a regular isotropic $x\in A^2$. The set of all isotropic lines of $(A^2,\omega)$ is denoted by $\PP(\Is(\omega))$.
\end{enumerate}
\end{df}

From now on, we assume $\omega(x,y):=\sigma(x)^T\Omega y$ with $\Omega=\Ome{1}$, $x,y\in A^2$. The form $\omega$ is called the \defin{standard symplectic form} on $A^2$. We introduce the symplectic group $\Sp_2(A,\sigma)$ over $(A,\sigma)$.

\begin{df}
The group $\Sp_2(A,\sigma):=\Aut(\omega)$ is the \defin{symplectic group} $\Sp_2$ over $(A,\sigma)$. $\Sp_2(A,\sigma)$ is a Lie group, its Lie algebra is $\spp_2(A,\sigma):=\End(\omega)$.
\end{df}

We have
$$\Sp_2(A,\sigma)=\left\{\begin{pmatrix}
    a & b \\
    c & d
    \end{pmatrix}\midwd \sigma(a)c,\,\sigma(b)d\in A^\sigma,\,\sigma(a)d-\sigma(c)b=1\right\}\subseteq \GL_2(A)$$

$$\spp_2(A,\sigma)=\left\{\begin{pmatrix}
    x & z \\
    y & -\sigma(x)
    \end{pmatrix}\midwd x\in A,\;y,z\in A^\sigma\right\}\subseteq \Mat_2(A).$$

We now assume $(A,\sigma)$ to be Hermitian.

\begin{prop}[{\cite[Lemman~2.7.3]{R-Thesis}}]
For a Hermitian algebra $(A,\sigma)$ the group $\Sp_2(A,\sigma)$ is a connected Lie group.
\end{prop}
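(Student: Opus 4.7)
That $\Sp_2(A,\sigma)$ is a Lie group is immediate from the explicit description displayed just before the statement: it is a closed subgroup of the Lie group $\GL_2(A)$ cut out by polynomial equations in the matrix entries. The content is connectedness, and the natural strategy, parallel to the classical proof for $\Sp(2n,\R)$, is to exhibit $\Sp_2(A,\sigma)$ as the total space of a principal bundle over a contractible generalized Siegel upper half-space, with connected structure group.

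Introduce the complexification $A_\CC:=A\otimes_\R\CC$ and extend $\sigma$ to a $\CC$-antilinear anti-involution $\sigma_\CC(a+ib):=\sigma(a)-i\sigma(b)$. Define
$$\mathcal{H}:=\{z=x+iy\mid x\in A^\sigma,\ y\in A^\sigma_+\}\subset A_\CC.$$
By the Remark following Definition~\ref{Herm_A} (where Hermitian-ness is essential), $A^\sigma_+$ is a non-empty open convex cone, so $\mathcal H$ is an open convex, hence contractible, subset of the affine space $A^\sigma+iA^\sigma$. Make $\Sp_2(A,\sigma)$ act on $\mathcal H$ by fractional linear transformations $g\cdot z=(az+b)(cz+d)^{-1}$ for $g=\bigl(\begin{smallmatrix}a&b\\c&d\end{smallmatrix}\bigr)$. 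I would check (i) that $cz+d$ is invertible in $A_\CC$ for every $z\in\mathcal H$, using positivity of the imaginary part together with the $\Sp_2$ relations, and (ii) that the resulting point still has real part in $A^\sigma$ and imaginary part in $A^\sigma_+$, which follows from the fact (Corollary~2.65 of~\cite{ABRRW}) that the action of $A^\times$ by congruence preserves $A^\sigma_+$.

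Transitivity then comes from two easy observations. The upper unipotents $\bigl(\begin{smallmatrix}1&b\\0&1\end{smallmatrix}\bigr)$, $b\in A^\sigma$, translate the real part of $z$ arbitrarily, while the ``diagonal'' subgroup $\bigl(\begin{smallmatrix}a&0\\0&\sigma(a)^{-1}\end{smallmatrix}\bigr)$, $a\in A^\times$, acts on the imaginary part by $y\mapsto ay\sigma(a)$ and reaches every element of $A^\sigma_+$ because each such element is of the form $b^2=b\sigma(b)$ with $b\in(A^\sigma)^\times$ (Definition~\ref{df:cone}). A direct computation identifies the stabilizer of $i\cdot 1\in\mathcal H$ as
$$K=\left\{\begin{pmatrix}a & b\\ -b & a\end{pmatrix}\in\Sp_2(A,\sigma)\midwd \sigma(a)a+\sigma(b)b=1,\ \sigma(a)b=\sigma(b)a\right\},$$
isomorphic via $(a,b)\mapsto a+ib$ to the unitary group $U_{(A_\CC,\sigma_\CC)}$. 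This $K$ is automatically compact, since its defining equations confine $\sigma(a)a$ and $\sigma(b)b$ to a bounded piece of the proper cone $A^\sigma_{\geq 0}$.

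The conclusion then follows formally: the orbit map $\Sp_2(A,\sigma)\to\mathcal H$, $g\mapsto g\cdot(i\cdot 1)$, is a principal $K$-bundle over a contractible base, so $\Sp_2(A,\sigma)$ is connected once $K$ is. The main obstacle is exactly this last step. For each of the three classical Hermitian matrix algebras listed after Definition~\ref{Herm_A}, one checks by hand that $K$ becomes $U(n)$, $U(n)\times U(n)$, or $\Sp(n)$ respectively, all classically connected. For a general Hermitian algebra, connectedness of $K$ can be obtained either by reducing to these cases via a Wedderburn-type classification of finite-dimensional semisimple $\R$-algebras with anti-involution, or more intrinsically by proving a spectral theorem inside $(A_\CC,\sigma_\CC)$ that writes every $k\in K$ as $\exp(X)$ with $X$ lying in the connected vector space $\Lie(K)=\{\bigl(\begin{smallmatrix}x & z\\ -z & x\end{smallmatrix}\bigr):x\in A^{-\sigma},z\in A^\sigma\}$, forcing $K$ to coincide with its identity component.
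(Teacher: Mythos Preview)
The paper does not give its own proof of this proposition; it merely cites \cite[Lemma~2.7.3]{R-Thesis}. Your argument is therefore not to be compared against anything here, but it is worth noting that your strategy is exactly the one the paper itself develops two paragraphs later: Theorem~\ref{model_of_ss} is precisely the statement that $\Sp_2(A,\sigma)$ acts transitively on $A^\sigma+iA^\sigma_+$ with stabilizer $\KSp_2(A,\sigma)$ at $i$. So your proof is well aligned with the paper's framework, and the reduction ``$\Sp_2(A,\sigma)$ connected $\Leftrightarrow$ $\KSp_2(A,\sigma)$ connected'' via the contractible tube domain is the right mechanism.

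Two remarks on the execution. First, a small slip: in the quaternionic case $A=\Mat(n,\HH)$ the group $K=\KSp_2(A,\sigma)$ is $\UU(2n)$, not $\Sp(n)$ (cf.\ Section~\ref{ex:groups}); you have written down $U_{(A,\sigma)}$ rather than $U_{(A_\CC,\sigma_\CC)}$. This is harmless for the conclusion since both are connected. Second, and more substantively, the step you flag as ``the main obstacle'' really is the entire content. Your classification route is the clean one: semisimple Hermitian algebras decompose as products of the three matrix types listed after Definition~\ref{Herm_A}, and $K$ decomposes accordingly into a product of $\UU(n)$'s, $\UU(n)\times\UU(n)$'s, and $\UU(2n)$'s, all connected. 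The alternative spectral-theorem route also works, but note that surjectivity of $\exp$ onto the identity component of a compact group is automatic, so what you actually need is that every $u\in U_{(A_\CC,\sigma_\CC)}$ has a logarithm in $A_\CC^{-\sigma_\CC}$; this is where a genuine spectral argument in $(A_\CC,\sigma_\CC)$ is required, and it again comes most easily from the classification.
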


We describe a maximal compact subgroup of $\Sp_2(A,\sigma)$.

\begin{df}
We denote:
$$\UU_2(A,\sigma):=\{M\in\Mat_2(A)\mid \sigma(M)^TM=\Id\};$$
$$\KSp_2(A,\sigma):=\Sp_2(A,\sigma)\cap\UU_2(A,\sigma).$$
\end{df}

\begin{teo}[{\cite[Theorem~3.19]{ABRRW}}]\label{maxcomp-Sp_R} The group
$\KSp_2(A,\sigma)$ is a maximal compact subgroup of $\Sp_2(A,\sigma)$.
\end{teo}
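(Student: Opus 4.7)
My plan is to reduce the statement to a Cartan-type polar decomposition $\Sp_2(A,\sigma) = \KSp_2(A,\sigma) \cdot \exp(\mathfrak{p})$ with $\mathfrak{p}$ a vector subspace of $\spp_2(A,\sigma)$ transverse to $\Lie(\KSp_2(A,\sigma))$, and then invoke the classical principle that the compact factor in such a decomposition is a maximal compact subgroup.

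First, I would verify that $\KSp_2(A,\sigma)$ is compact. Writing $M = \begin{pmatrix} a & b \\ c & d \end{pmatrix} \in \UU_2(A,\sigma)$, the identity $\sigma(M)^T M = \Id$ forces $\sigma(a)a + \sigma(c)c = 1$ and $\sigma(b)b + \sigma(d)d = 1$. Since $(A,\sigma)$ is Hermitian, elements of the form $\sigma(x)x$ lie in the proper convex cone $A^\sigma_{\geq 0}$, so these two relations confine $a,b,c,d$ to a bounded closed subset of $A$. Thus $\UU_2(A,\sigma)$ is compact, and $\KSp_2(A,\sigma)$ is a closed subgroup of it.

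For the decomposition itself, I would use the Cartan involution $\theta(M)\coloneqq -\sigma(M)^T$ on $\spp_2(A,\sigma)$. A direct check shows that its $(+1)$-eigenspace is precisely $\mathfrak{k} = \Lie(\KSp_2(A,\sigma))$, while its $(-1)$-eigenspace is $\mathfrak{p} = \left\{\begin{pmatrix} x & y \\ y & -x \end{pmatrix} : x,y \in A^\sigma\right\}$. To lift this infinitesimal splitting to a global polar decomposition $g = K\exp(X)$ with $K\in\KSp_2(A,\sigma)$ and $X\in\mathfrak{p}$, I would argue that $\sigma(g)^T g$ is a ``$\sigma$-positive'' element of $\Sp_2(A,\sigma)$ whose entries lie in suitable cones built from $A^\sigma_+$, admitting a unique symplectic square root $P\in\exp(\mathfrak{p})$; then $K\coloneqq gP^{-1}$ automatically lies in $\KSp_2(A,\sigma)$. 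The Hermitian hypothesis on $(A,\sigma)$ is essential here: it is what allows ``positivity'' to be defined via $A^\sigma_+$ at all, and it is what powers the functional calculus needed for existence and uniqueness of the square root.

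Once the decomposition is in place, $\Sp_2(A,\sigma)$ is homeomorphic to $\KSp_2(A,\sigma)\times\mathfrak{p}$, and the standard argument concludes the proof: any compact subgroup $H\supseteq\KSp_2(A,\sigma)$ acts on the contractible space $\exp(\mathfrak{p})\cong\Sp_2(A,\sigma)/\KSp_2(A,\sigma)$ through $(h,p)\mapsto\theta(h)ph^{-1}$, has a fixed point by a Cartan/Mostow-type averaging argument, and is therefore conjugate into (hence equal to) $\KSp_2(A,\sigma)$. The main obstacle is the middle step, namely constructing the positive symplectic square root in the noncommutative setting: the usual spectral calculus over $\R$ or $\CC$ is not available, and one must replace it by arguments directly manipulating the cone $A^\sigma_+$ and its compatibility with the standard symplectic form $\omega$, using the structural results on Hermitian algebras recalled earlier in this section.
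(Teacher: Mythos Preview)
The paper does not prove this statement; it simply records it as \cite[Theorem~3.19]{ABRRW} and moves on, so there is no in-text argument to compare against. Your Cartan-decomposition outline is a standard and correct route, and all of the ingredients you invoke are available in the Hermitian-algebra framework and are in fact cited or stated elsewhere in the paper: compactness of $U_{(A,\sigma)}$, properness of the cone $A^\sigma_{\geq 0}$, and existence and uniqueness of square roots in $A^\sigma_+$. One cleaner way to organize your middle step is to note that $(\Mat_2(A),\,M\mapsto\sigma(M)^T)$ is again a Hermitian algebra whenever $(A,\sigma)$ is, so the polar decomposition for Hermitian algebras \cite[Theorem~2.66]{ABRRW} (which the paper itself uses in the proof of Theorem~\ref{main_thm}) applied to this larger algebra yields $g=KP$ with $K\in\UU_2(A,\sigma)$ and $P$ positive, and the global Cartan involution $g\mapsto(\sigma(g)^T)^{-1}$ preserves $\Sp_2(A,\sigma)$ and forces both factors to lie in $\Sp_2(A,\sigma)$; this sidesteps building a bespoke symplectic functional calculus. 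You might also observe that the paper's very next result, Theorem~\ref{model_of_ss}, gives an alternative path: it identifies $\Sp_2(A,\sigma)/\KSp_2(A,\sigma)$ with the tube domain $A^\sigma+iA^\sigma_+$, which is visibly contractible, and maximal compactness of $\KSp_2(A,\sigma)$ follows from that once compactness is known.
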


Let $A_\CC$ be the complexification of the algebra $A$, i.e. $A_\CC:=A\otimes_\R\CC=\{a_1+ia_2\mid a_1,a_2\in A\}$.

\begin{teo}[{\cite[Proposition~5.8]{ABRRW}}]\label{model_of_ss}
The group $\Sp_2(A,\sigma)$ acts on
$$A^\sigma+i A^\sigma_+=\{a_1+ia_2\mid a_1\in A^\sigma,\;a_2\in A^\sigma_+\}\subset A_\CC$$
transitively by generalized M\"obius transformations: for $z\in A^\sigma+i A^\sigma_+$ and $g=\begin{pmatrix} c_{11} & c_{12}\\ c_{21} & c_{22}\end{pmatrix}$
$$g(z):=(c_{11}z+c_{12})(c_{21}z+c_{22})^{-1}.$$
The stabilizer of the point $i\in A^\sigma+i A^\sigma_+$ is $\KSp_2(A,\sigma)$. In particular, $A^\sigma+i A^\sigma_+$ is a model of the symmetric space of $\Sp_2(A,\sigma)$.
\end{teo}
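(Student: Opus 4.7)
The plan is to mirror the classical Siegel upper half space argument, replacing the spaces of Hermitian matrices over $\R$, $\CC$, or $\HH$ by the abstract cone $A^\sigma_+$ of a Hermitian algebra $(A,\sigma)$. I extend $\sigma$ to $A_\CC=A\otimes_\R\CC$ antilinearly over $\CC$, so that elements of $A^\sigma+iA^\sigma$ are precisely those $z\in A_\CC$ with $\sigma(z)=\bar z$. The computational engine of the whole proof is the identity
\begin{equation*}
\sigma(cz+d)(az+b) - \sigma(az+b)(cz+d) \;=\; z-\sigma(z),
\end{equation*}
valid for $g=\begin{pmatrix}a & b\\ c & d\end{pmatrix}\in\Sp_2(A,\sigma)$ and $z\in A_\CC$. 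This is a direct expansion: the relations $\sigma(c)a=\sigma(a)c$, $\sigma(d)b=\sigma(b)d$, and $\sigma(a)d-\sigma(c)b=1$, together with the $\sigma$-image $\sigma(d)a-\sigma(b)c=1$ of the latter, cause all cross-terms in $z$ and $\sigma(z)$ to collapse.

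Using this identity I show that $cz+d$ is invertible in $A_\CC$ whenever $z\in A^\sigma+iA^\sigma_+$ and that $g$ sends $z$ back into $A^\sigma+iA^\sigma_+$. If $(cz+d)v=0$ for some $v\in A_\CC$, sandwiching the identity between $\sigma(v)$ and $v$ gives $\sigma(v)(z-\bar z)v = 2i\,\sigma(v)\,y\,v=0$, where $y=\im(z)\in A^\sigma_+$. Writing $y=s^2$ with $s\in(A^\sigma)^\times$ and $v=v_1+iv_2$ with $v_i\in A$, the real part of $\sigma(v)s^2 v$ equals $\sigma(sv_1)(sv_1)+\sigma(sv_2)(sv_2)$, a sum of two elements of the proper convex cone $A^\sigma_{\geq 0}$; properness forces each summand to vanish, so $sv_1=sv_2=0$ and hence $v=0$, and by finite-dimensionality $cz+d$ is invertible. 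Once this is known, the same symplectic relations yield $\sigma(g(z))=\sigma(cz+d)^{-1}\sigma(az+b)=\overline{g(z)}$, so $g(z)\in A^\sigma+iA^\sigma$; sandwiching the main identity between $\sigma(cz+d)^{-1}$ and $(cz+d)^{-1}$ gives the transformation law
\begin{equation*}
\im(g(z)) \;=\; \sigma(cz+d)^{-1}\,y\,(cz+d)^{-1},
\end{equation*}
and the same real-part decomposition exhibits this as an element of $A^\sigma_{\geq 0}$; invertibility of both $(cz+d)^{-1}$ and $s$ places the result in the open core $A^\sigma_+$. Standard cocycle manipulations then confirm that $g\cdot z := (az+b)(cz+d)^{-1}$ is a genuine group action on $A^\sigma+iA^\sigma_+$.

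For transitivity, I use the subgroup generated by $T_x=\begin{pmatrix}1 & x\\ 0 & 1\end{pmatrix}$ for $x\in A^\sigma$ (translations) and $D_a=\begin{pmatrix}a & 0\\ 0 & a^{-1}\end{pmatrix}$ for $a\in(A^\sigma)^\times$ (dilations); both lie in $\Sp_2(A,\sigma)$, and $D_a(i)=ia^2$ because $a$ commutes with the central element $i\in A_\CC$, so $T_x D_a(i)=x+ia^2$ realises every point of $A^\sigma+iA^\sigma_+$. Finally, $g(i)=i$ rewrites as $ai+b=-c+id$, forcing $a=d$ and $b=-c$; for such $g=\begin{pmatrix}a & -c\\ c & a\end{pmatrix}$ the symplectic conditions $\sigma(a)c\in A^\sigma$ and $\sigma(a)d-\sigma(c)b=1$ become precisely $\sigma(a)c=\sigma(c)a$ and $\sigma(a)a+\sigma(c)c=1$, which are exactly the entries of $\sigma(g)^T g=\Id$. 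Thus $\Stab(i)=\Sp_2(A,\sigma)\cap\UU_2(A,\sigma)=\KSp_2(A,\sigma)$, as required. The main obstacle is the invertibility step, as $A_\CC$ need not be semisimple and ordinary determinant-based arguments are unavailable; one must genuinely exploit the Hermitian hypothesis, via the proper convex cone $A^\sigma_{\geq 0}$, to rule out the relevant zero divisors.
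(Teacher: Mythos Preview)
The paper does not actually prove this theorem; it is quoted from \cite[Proposition~5.8]{ABRRW} and immediately used without argument. So there is no ``paper's own proof'' to compare against, and your proposal stands on its own.

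Your argument is essentially correct and follows the classical Siegel upper half space template. The key identity $\sigma(cz+d)(az+b)-\sigma(az+b)(cz+d)=z-\sigma(z)$ is verified correctly, and it indeed drives both the invertibility of $cz+d$ and the transformation law for the imaginary part. Two points deserve comment:

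\emph{First}, in the invertibility step you use that $\sigma(w)w\in A^\sigma_{\geq 0}$ for every $w\in A$ and that $\sigma(w)w=0$ forces $w=0$. Neither statement is part of the definition of a Hermitian algebra given in this paper (which only requires $x^2+y^2=0\Rightarrow x=y=0$ for $x,y\in A^\sigma$). Both facts are true and are established in \cite[Section~2]{ABRRW}, but you should cite them explicitly rather than treat them as obvious; they are genuine consequences of the Hermitian hypothesis, not of the bare definitions reproduced here.

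\emph{Second}, for the stabilizer you show that $g(i)=i$ implies $g=\begin{pmatrix}a & -c\\ c & a\end{pmatrix}$ and that such a symplectic matrix lies in $\UU_2(A,\sigma)$. This gives $\Stab(i)\subseteq\KSp_2(A,\sigma)$. The reverse inclusion is not addressed: you need that every element of $\KSp_2(A,\sigma)$ has this block form. This follows from observing that $g\in\Sp_2\cap\UU_2$ satisfies both $\sigma(g)^T=\Omega g^{-1}\Omega^{-1}$ and $\sigma(g)^T=g^{-1}$, hence $g\Omega=\Omega g$, which forces $a=d$ and $b=-c$; then $g(i)=(ai-c)(ci+a)^{-1}=i$ since $i(ci+a)=ai-c$. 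This is a one-line addition, but as written the equality of the stabilizer with $\KSp_2(A,\sigma)$ is asserted rather than proved.
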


Theorem~\ref{model_of_ss} shows in particular that the symmetric space of $\Sp_2(A,\sigma)$ is a non-compact complex manifold.

\subsection{Hermitian symmetric spaces and Hermitian Lie groups}\label{sec:herm_lie}

Let $M$ be a connected complex manifold, $g$ be a Hermitian metric on $M$.
\begin{df}
$(M,g)$ is called \emph{Hermitian symmetric space} if every $p\in M$ is an isolated fixed point of a holomorphic involutive isometry $s_p\colon M\to M$.
\end{df}

\begin{teo}[{Harish-Chandra embedding theorem~\cite[Theorem~7.1]{Helgason}}]
Every non-compact Hermitian symmetric space is biholomorphic to an open bounded domain in a complex vector space.
\end{teo}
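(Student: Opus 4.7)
The plan is to follow the classical Harish-Chandra construction. Write $M=G/K$ where $G$ is the identity component of the isometry group of $M$ and $K$ is a maximal compact subgroup; since $M$ is Hermitian the center of $K$ contains a circle, and one obtains from the almost complex structure at the base point a distinguished central element $Z\in Z(\mathfrak{k})$. First I would perform the spectral analysis of $\mathrm{ad}(Z)$ on $\mathfrak{g}_\mathbb{C}$: its eigenvalues are $0,\pm i$, producing a decomposition $\mathfrak{g}_\mathbb{C}=\mathfrak{p}^+\oplus\mathfrak{k}_\mathbb{C}\oplus\mathfrak{p}^-$ into abelian complex subalgebras $\mathfrak{p}^\pm$ interchanged by conjugation, with $[\mathfrak{k}_\mathbb{C},\mathfrak{p}^\pm]\subseteq\mathfrak{p}^\pm$.

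Next I would pass to the simply connected complex group $G_\mathbb{C}$ with Lie algebra $\mathfrak{g}_\mathbb{C}$ and introduce $P^\pm=\exp(\mathfrak{p}^\pm)$, $K_\mathbb{C}=\exp(\mathfrak{k}_\mathbb{C})$. The key auxiliary object is the flag variety $X=G_\mathbb{C}/K_\mathbb{C} P^-$, a compact complex manifold carrying the Borel embedding of the compact dual $G_u/K$. Using the abelian structure of $\mathfrak{p}^+$, the map
\[
\xi\colon\mathfrak{p}^+\longrightarrow X,\qquad X\longmapsto \exp(X)\cdot o_X,
\]
is an injective holomorphic open immersion onto a dense Zariski-open subset of $X$, giving a canonical complex chart. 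The real form $G\subset G_\mathbb{C}$ also acts on $X$ and fixes the base point with stabilizer $K$, inducing a holomorphic $G$-equivariant map $M\to X$.

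The core analytic step is then to show that $G\cdot o_X\subseteq \xi(\mathfrak{p}^+)$, i.e.\ every $g\in G$ admits a unique decomposition $g=\exp(X_+(g))\cdot k(g)\cdot \exp(X_-(g))$ with $X_\pm(g)\in\mathfrak{p}^\pm$ and $k(g)\in K_\mathbb{C}$. I would verify this by a direct $\mathfrak{sl}_2$-computation inside each real rank one subgroup generated by a restricted root vector in $\mathfrak{p}$, and then extend it to all of $G$ via the $KAK$-decomposition using that $K$ clearly preserves the factorization. Composing with $\xi^{-1}$ defines the desired holomorphic embedding $\Phi\colon M\hookrightarrow\mathfrak{p}^+$.

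The hard part will be showing that the image $\Phi(M)$ is bounded. For this I would equip $\mathfrak{p}^+$ with a $K$-invariant Hermitian norm (available since $K$ is compact) and, using the polar decomposition $G=K\exp(\mathfrak{a})K$ with $\mathfrak{a}\subset\mathfrak{p}$ a maximal abelian subspace, reduce by $K$-equivariance to estimating $\Phi(\exp(H)\cdot o)$ for $H\in\mathfrak{a}$. Choosing a maximal system of strongly orthogonal restricted roots (the Harish-Chandra/Moore polydisk theorem) splits this estimate as a product of $\mathrm{SL}(2,\mathbb{R})$-computations, each of which is the classical Cayley transform sending the upper half-plane biholomorphically onto the open unit disk. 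The resulting polydisk bound gives $\Phi(M)\subseteq B$ for an explicit bounded balanced convex domain $B\subset\mathfrak{p}^+$, which is the sought bounded realization.
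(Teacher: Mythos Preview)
The paper does not prove this theorem at all: it is quoted as a background result with a reference to Helgason, \cite[Theorem~7.1]{Helgason}, and no argument is given. So there is no ``paper's own proof'' to compare against.

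That said, your outline is exactly the classical Harish-Chandra construction one finds in Helgason: the $\mathrm{ad}(Z)$-eigenspace decomposition $\mathfrak{g}_\mathbb{C}=\mathfrak{p}^+\oplus\mathfrak{k}_\mathbb{C}\oplus\mathfrak{p}^-$, the Borel embedding into $G_\mathbb{C}/K_\mathbb{C}P^-$, the $P^+K_\mathbb{C}P^-$-factorization of elements of $G$ (reduced via $KAK$ to rank-one $\mathfrak{sl}_2$-computations), and the boundedness via the polydisk theorem using strongly orthogonal roots. The sketch is correct in spirit and in the ordering of the main steps. One minor presentational point: you use the symbol $X$ both for the flag variety $G_\mathbb{C}/K_\mathbb{C}P^-$ and for a generic element of $\mathfrak{p}^+$ in the definition of $\xi$; you should rename one of them to avoid confusion.
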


\begin{df}
A Hermitian symmetric space is called \emph{of tube type} if it is biholomorphic to a domain of the form $V + i\Omega$ where $V$ is a real vector space
and $\Omega\subset V$ is an open proper convex cone.
\end{df}

Let $G$ be a connected non-compact semisimple Lie group with finite (or more generally with compact) center, $K$ be a maximal compact subgroup of $G$.
\begin{df}
The group $G$ is called \emph{Hermitian Lie group} if the symmetric space $G/K$ is Hermitian. $G$ is called \emph{Hermitian Lie group of tube type} if the symmetric space $G/K$ is Hermitian of tube type.
\end{df}

\begin{cor}
Let $(A,\sigma)$ be a Hermitian algebra. From Theorem~\ref{model_of_ss} follows that $\Sp_2(A,\sigma)$ is a Hermitian Lie group of tube type.
\end{cor}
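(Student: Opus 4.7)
The plan is to observe that Theorem~\ref{model_of_ss} already furnishes the symmetric space of $\Sp_2(A,\sigma)$ in exactly the shape demanded by the definition of tube type, so the corollary is essentially a matter of checking that each ingredient in the definitions of \emph{Hermitian Lie group} and \emph{tube type} is supplied by the results stated earlier in the excerpt. First I would set up the identification $G/K \cong A^\sigma + i A^\sigma_+$ where $G = \Sp_2(A,\sigma)$ and $K = \KSp_2(A,\sigma)$; this uses the transitive action with stabilizer $\KSp_2(A,\sigma)$ from Theorem~\ref{model_of_ss} together with the fact that $\KSp_2(A,\sigma)$ is maximal compact by Theorem~\ref{maxcomp-Sp_R}. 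Connectedness of $G$ is the preceding proposition, and non-compactness follows from the non-compactness of $A^\sigma + i A^\sigma_+$, which in turn reflects non-compactness of the cone $A^\sigma_+$.

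Next I would equip the target $A^\sigma + i A^\sigma_+ \subset A_\CC$ with the complex manifold structure inherited from the complex vector space $A_\CC = A \otimes_\R \CC$; as an open subset of a complex vector space it is a complex manifold. The generalized Möbius formula $g(z) = (c_{11}z + c_{12})(c_{21}z + c_{22})^{-1}$ is holomorphic in $z$ (since multiplication and inversion in $A_\CC$ are holomorphic on the open locus where $c_{21}z + c_{22}$ is invertible), so $G$ acts by biholomorphisms. A Hermitian metric invariant under $G$ can then be obtained by averaging any Hermitian inner product on $T_i(A^\sigma + i A^\sigma_+) \cong A_\CC$ over the compact group $\KSp_2(A,\sigma)$ and translating it around by $G$; the resulting metric is Hermitian and $G$-invariant, and the symmetry $s_i(z) = -z^{-1}$ at the basepoint $i$ (induced by the element $\Omega \in \Sp_2(A,\sigma)$) provides the required holomorphic involutive isometry, which one then transports to every other point by the transitive action. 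This shows $G/K$ is a Hermitian symmetric space, hence $G$ is a Hermitian Lie group.

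Finally, to conclude ``of tube type'' I would invoke the precise form of the symmetric space: we already have the description
\[
G/K \;\cong\; A^\sigma + i\, A^\sigma_+ \;=\; V + i\Omega
\]
with $V := A^\sigma$ a real vector space and $\Omega := A^\sigma_+$. By the remark cited from~\cite{ABRRW}, $A^\sigma_+$ is an open proper convex cone in $A^\sigma$, so this domain is literally a tube domain in the sense of the definition preceding the corollary. Therefore $G/K$ is biholomorphic to a domain of tube type, and $\Sp_2(A,\sigma)$ is a Hermitian Lie group of tube type.

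The only slightly delicate point in this plan is the verification of the semisimplicity/compact-center hypothesis implicit in the definition of a Hermitian Lie group, which is not stated in the excerpt; I would handle this either by appealing to the classification implicit in the preceding discussion of $(A,\sigma)$ (so that $\Sp_2(A,\sigma)$ is one of the classical Hermitian groups of tube type) or by citing the structural results of~\cite{ABRRW}. Everything else is an essentially formal consequence of Theorem~\ref{model_of_ss}, Theorem~\ref{maxcomp-Sp_R}, and the convex-cone properties of $A^\sigma_+$.
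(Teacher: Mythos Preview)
Your proposal is correct and aligns with the paper's approach; in fact the paper does not give a separate proof of this corollary at all, treating it as immediate from Theorem~\ref{model_of_ss} together with the definitions just introduced. Your write-up simply makes explicit the details the paper leaves to the reader: the identification $G/K \cong A^\sigma + iA^\sigma_+$ via Theorems~\ref{model_of_ss} and~\ref{maxcomp-Sp_R}, the complex structure inherited from $A_\CC$, and the observation that $V = A^\sigma$, $\Omega = A^\sigma_+$ exhibits the symmetric space as a tube domain since $A^\sigma_+$ is an open proper convex cone. The caveat you flag about semisimplicity/compact center is genuine and is indeed handled in the paper only implicitly via the classification and the reference to~\cite{ABRRW}.
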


\begin{fact}
Hermitian Lie groups of tube type are classified. Up to coverings, these are:
$$\Sp(2n,\R),\;\;\SU(n,n),\;\; \SO^*(4n),\;\; \SO_0(2,n+2),\;\; E_{7(-25)}$$
for $n\in \N$. The first four groups are called \defin{classical}, the last one is \defin{exceptional}.
\end{fact}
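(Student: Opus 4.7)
The plan is to derive this list as a direct corollary of Cartan's classification of irreducible Hermitian symmetric spaces of noncompact type, by selecting from that list those whose restricted root system is of type $C_r$ rather than $BC_r$. First I would reduce to the simple case: any Hermitian symmetric space of noncompact type splits as a holomorphic Riemannian product of irreducible ones, and such a product is of tube type iff each factor is, since the tube over a product of proper convex cones is the product of the tubes over them. Hence $G$ is a Hermitian Lie group of tube type iff each simple factor of $G$ is, and it suffices to enumerate the simple tube type examples.

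Next, I would invoke Cartan's classification: up to coverings, the irreducible Hermitian symmetric spaces of noncompact type come from the groups $\SU(p,q)$ with $1\leq p\leq q$, $\SO_0(2,n)$ with $n\geq 1$, $\Sp(2n,\R)$ with $n\geq 1$, $\SO^*(2n)$ with $n\geq 3$, $E_{6(-14)}$, and $E_{7(-25)}$. This is the heavy lifting and I would cite it (e.g.\ Helgason, \emph{Differential Geometry, Lie Groups and Symmetric Spaces}) rather than reprove it. To extract the tube type cases I would use the standard criterion that $G/K$ is of tube type iff the restricted root system of $\g=\Lie(G)$ with respect to a maximal abelian subspace of $\mathfrak p$ is of type $C_r$ (as opposed to $BC_r$), equivalently iff the Harish-Chandra strongly orthogonal root system has cardinality equal to the real rank of $G$. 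An equivalent route, via the Koecher-Vinberg theorem, is to observe that irreducible tube type Hermitian symmetric spaces are in bijection with simple Euclidean Jordan algebras, whose classification (Hermitian matrices over $\R$, $\CC$, $\HH$; spin factors; and the Albert algebra over $\Oc$) produces the same enumeration.

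Applying the criterion family by family: $\SU(p,q)$ has restricted root system of type $C_p$ when $p=q$ and $BC_p$ otherwise, so only $\SU(n,n)$ survives; $\SO^*(2n)$ has type $C_{n/2}$ when $n$ is even and $BC_{(n-1)/2}$ when $n$ is odd, yielding the family $\SO^*(4n)$; $\Sp(2n,\R)$ has type $C_n$ and is always tube type; $\SO_0(2,n)$ has type $C_2$ for $n\geq 2$ and is tube type (reindexing $n\mapsto n+2$ as in the statement gives the $\SO_0(2,n+2)$ entry); $E_{6(-14)}$ has type $BC_2$ and is excluded; and $E_{7(-25)}$ has type $C_3$ and is included. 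This produces exactly the five families listed in the Fact.

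The main obstacle is not the criterion itself, which once Cartan's classification is available reduces to a table look-up against Araki's tables of restricted root systems for real forms; rather it is the bookkeeping of indexing conventions and of low-dimensional exceptional isomorphisms, for instance $\SU(1,1)\cong\Sp(2,\R)\cong\SO_0(2,1)$ at the bottom of the rank-one families, the fact that $\SO^*(4)$ is not simple, and the splitting $\SO_0(2,2)$ up to finite cover into $\SL(2,\R)\times\SL(2,\R)$, all of which must be handled so that each genuinely distinct simple cover appears exactly once and so that the parameter ranges match those displayed in the Fact.
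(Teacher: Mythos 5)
Your proposal is correct, and it is the standard derivation of this classification: reduce to irreducible factors, invoke Cartan's list of irreducible Hermitian symmetric spaces of noncompact type, and select the tube type cases via the $C_r$ versus $BC_r$ dichotomy for the restricted root system (equivalently, via the Koecher--Vinberg correspondence with simple Euclidean Jordan algebras). The paper itself offers no proof of this Fact --- it is stated as a known classification result --- so there is no argument to compare against; your route is the one found in the standard references, and the points you flag (the low-rank coincidences such as $\SU(1,1)\cong\Sp(2,\R)$, the non-simplicity of $\SO^*(4)$ and of $\SO_0(2,2)$ up to finite cover) are exactly the bookkeeping needed to make the parameter ranges in the displayed list non-redundant.
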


\subsection{Realization of classical Hermitian Lie groups of tube type as \texorpdfstring{$\Sp_2(A,\sigma)$}{Sp2(A,sigma)}}\label{ex:groups}

If $(A,\sigma)$ is a Hermitian algebra, then the symplectic group $\Sp_2(A,\sigma)$ is isomorphic to classical Hermitian Lie groups of tube type.

\begin{enumerate}
    \item Real symplectic group $\Sp(2n,\R)$. To realize the real symplectic group $\Sp(2n,\R)$, we take $A=\Mat(n,\R)$ to the be algebra of $n\times n$ matrices over $\R$ and consider the involution $\sigma: A \to A$ given by $\sigma(r)=r^T$, $r\in A$. Then $\Sp_2(A,\sigma)$ is isomorphic to $\Sp(2n,\R)$. Its maximal compact subgroup $\KSp_2(A,\sigma)$ is isomorphic to $\UU(n)$.

    \item Indefinite unitary group $\UU(n,n)$. To realize the indefinite unitary group $\UU(n,n)$, we consider $A=\Mat(n,\CC)$, and the involution $\bar\sigma: A \to A$ given by $\bar\sigma(r)=\bar r^T$. Then $\Sp_2(A,\bar\sigma)$ is isomorphic to $\UU(n,n)$. To see this, we notice that the standard Hermitian form $h$ of signature $(n,n)$ on $\CC^{2n}$ is given by $h(x,y):=i\omega(xT,yT)$ where $T=\diag(\Id_n,-i\Id_n)$. Further, $\KSp_2(A,\sigma)$ is isomorphic to $\UU(n)\times\UU(n)$. Notice that we cannot realize the special unitary group $\mathrm{SU}(n,n)$ as $\Sp_2(A,\sigma)$. But since the center of $\UU(n,n)$ is compact, it does not affect the symmetric space.

    \item The group $\SO^*(4n)$. By definition, the group $\SO^*(2n)$ (some authors also use the notation $\OO(n,\HH)$) is the group of isometries of the following form on the quaternionic right module $\HH^n$:
        $$\beta(x,y)=\sum_{i=1}^{n} \bar x_i j y_i$$
        where $x=(x_1,\dots,x_n),y=(y_1,\dots y_n)\in\HH^n$ and $\bar\cdot\colon\HH\to\HH$ is the quaternionic conjugation.

        The groups $\SO^*(2n)$ are Hermitian Lie groups, but they are of tube type only if $n$ is even. In order to realize $\SO^*(4n)$ as $\Sp_2(A,\sigma)$ we consider  $A=\Mat(n,\HH)$ and the involution $\sigma_1: A \to A$, given by $\sigma_1(r)=\bar r^T=\bar\sigma(r_1)-\sigma(r_2)j$ for $r=r_1+r_2j$ and $r_1,r_2\in \Mat(n,\CC)$. Then $\Sp_2(A,\sigma_1)$ is isomorphic $\SO^*(4n)$ considered as the group of isometries of the quaternionic form $\beta$ on $\HH^{2n}$ defined by
        $$\beta(x,y)=\sum_{i=1}^{2n} \bar x_i j y_i=\bar x^T(\Id_{2n}j)y.$$
        To see this, we notice that
        $$\Id_{2n} j=\sigma_1(T)\Ome{\Id_n} T$$
        for
        $$T=\frac{1}{\sqrt{2}}\begin{pmatrix}\Id_n & -\Id_n j \\-\Id_n j & \Id_n\end{pmatrix}.$$
        The group $\KSp_2(A,\sigma)$ is isomorphic to $\UU(2n)$.
\end{enumerate}

\begin{rem}
The Hermitian Lie group of tube type $\SO_0(2,n)$ cannot be realized in the same way as $\Sp_2(A,\sigma)$. Nevertheless, the double cover $\Spin_0(2,n)$ of $\SO_0(2,n)$ can be realized as $\Sp_2$ over some more complicated object which we do not discuss in this paper (for more details about this realization see~\cite[Section~2.10]{R-Thesis}).
\end{rem}

\begin{rem}\label{rem:exceptional}
The exceptional Hermitian Lie group of tube type $E_{7(-25)}$ cannot be realized as $\Sp_2(A,\sigma)$. The reason is that the Euclidean Jordan algebra $\Herm(3,\Oc)$ cannot be embedded into any associative algebra, in particular it cannot be realized as $A^\sigma$ for a Hermitian algebra $(A,\sigma)$. For more details about Euclidean Jordan algebras and their embeddings into associative algebras we refer to~\cite{Faraut,Hanche84}.
\end{rem}

\section{Space of isotropic lines}\label{sec:is_islines}

We assume $(A,\sigma)$ to be Hermitian. We denote by $\PP(A^2)$ the space of lines in $A^2$, i.e.
$$\PP(A^2):=\{xA\mid x\in A^2\text{ regular}\}.$$
The group $\GL_2(A)$ acts on $\PP(A^2)$.

If $A^2$ is equipped with the standard symplectic form $\omega$, we denote by $\PP(\Is(\omega))$ the space of all isotropic (with respect to $\omega$) lines:
$$\PP(\Is(\omega))=\{xA\subset A^2\mid \omega(x,x)=0,\,x\text{ regular}\}.$$
This space is a closed subspace of $\PP(A^2)$. The group $\Sp_2(A,\sigma)$ acts on the space of isotropic lines.

\subsection{Spaces of lines as compact symmetric spaces}
The spaces $\PP(A^2)$ and $\PP(\Is(\omega))$ can be seen as compact symmetric spaces as follows:

\begin{prop}[{\cite[Proposition~4.1]{ABRRW}}]\label{PP_compact}
Let $(A,\sigma)$ be Hermitian. The group $\UU_2(A,\sigma)$ acts transitively on $\PP(A^2)$ with
$$\Stab_{\UU_2(A,\sigma)}\Clm{1}{0}A=\left\{\begin{pmatrix}u_1 & 0 \\ 0 & u_2\end{pmatrix}\midwd u_1,u_2\in U_{(A,\sigma)}\right\}\cong U_{(A,\sigma)}\times U_{(A,\sigma)}.$$
In particular, the space $\PP(A^2)$ is compact and homeomorphic to the quotient space:
$$\UU_2(A,\sigma)/U_{(A,\sigma)}\times U_{(A,\sigma)}$$
where the group $U_{(A,\sigma)}\times U_{(A,\sigma)}$ is embedded into $\UU_2(A,\sigma)$ diagonally.
\end{prop}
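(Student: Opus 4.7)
The plan is to establish transitivity of the $\UU_2(A,\sigma)$-action on $\PP(A^2)$ by a Gram--Schmidt procedure adapted to the noncommutative Hermitian setting, and then to read the stabilizer of $e_1A:=\begin{pmatrix}1\\0\end{pmatrix}A$ directly from the relations defining $\UU_2(A,\sigma)$. Throughout I work with the Hermitian form $\langle x,y\rangle:=\sigma(x)^T y$ on $A^2$ and rely on the following standard facts about a Hermitian algebra: $\sigma(a)a\in A^\sigma_{\geq 0}$ for every $a\in A$, the set $A^\sigma_{\geq 0}$ is a proper convex cone with open core $A^\sigma_+$, every element of $A^\sigma_+$ has a unique $\sigma$-symmetric square root in $(A^\sigma)^\times$, and $\sigma(a)a=0$ forces $a=0$.

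The key preliminary step is the claim that for every regular $x=\begin{pmatrix}x_1\\x_2\end{pmatrix}\in A^2$ one has $\langle x,x\rangle=\sigma(x_1)x_1+\sigma(x_2)x_2\in A^\sigma_+$. Non-negativity is immediate from convexity of $A^\sigma_{\geq 0}$. For invertibility I argue by contradiction: if $\langle x,x\rangle$ is not a unit, then finite-dimensionality and semisimplicity of $A$ furnish $r\in A\setminus\{0\}$ with $\langle x,x\rangle r=0$; multiplying by $\sigma(r)$ on the left yields $\langle xr,xr\rangle=0$, and properness of the cone forces $\sigma(x_ir)(x_ir)=0$ for $i=1,2$. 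The Hermitian property then gives $x_1r=x_2r=0$, so $xr=0$; extending $x$ to a basis $(x,y)$ of $A^2$ and applying the inverse change-of-basis matrix produces $r=0$, a contradiction.

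With the lemma in hand, the Gram--Schmidt step is straightforward. Set $s:=\sqrt{\langle x,x\rangle}\in(A^\sigma)^\times$ and $v_1:=xs^{-1}$, so that $\langle v_1,v_1\rangle=1$. Pick any $y\in A^2$ with $(v_1,y)$ a basis, form $v_2':=y-v_1\langle v_1,y\rangle$ (still a basis-partner of $v_1$, hence regular), and normalize $v_2:=v_2'\bigl(\sqrt{\langle v_2',v_2'\rangle}\bigr)^{-1}$, well-defined by a second application of the positivity lemma. The matrix $U:=(v_1\ v_2)$ then satisfies $\sigma(U)^T U=\Id$, i.e.\ $U\in\UU_2(A,\sigma)$, and $U\cdot e_1A=v_1A=xA$, proving transitivity.

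For the stabilizer, suppose $U\in\UU_2(A,\sigma)$ satisfies $U\cdot e_1A=e_1A$; then the first column of $U$ equals $\begin{pmatrix}u_1\\0\end{pmatrix}$ with $u_1\in A^\times$, so $U=\begin{pmatrix}u_1 & b\\0 & u_2\end{pmatrix}$. Expanding $\sigma(U)^T U=\Id$ gives $\sigma(u_1)u_1=1$, $\sigma(u_1)b=0$ (hence $b=0$, as $u_1$ is invertible), and $\sigma(u_2)u_2=1$, identifying the stabilizer with the diagonal copy of $U_{(A,\sigma)}\times U_{(A,\sigma)}$. The homogeneous-space identification then follows from compactness of $\UU_2(A,\sigma)$ (a closed bounded subset of $\Mat_2(A)$ by the unitarity identity) together with Hausdorffness of $\PP(A^2)$: the orbit map descends to a continuous bijection from a compact space onto $\PP(A^2)$, hence a homeomorphism, which also yields compactness of $\PP(A^2)$. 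The main obstacle is the positivity lemma, which is where the Hermitian hypothesis is genuinely used; once it is established, the remaining steps are routine noncommutative linear algebra carefully tracking the left/right module conventions.
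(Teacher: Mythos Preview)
Your proof is correct. The paper itself does not prove this proposition but cites \cite[Proposition~4.1]{ABRRW}, and your Gram--Schmidt argument is the standard approach and essentially what appears there: the positivity lemma (that $\langle x,x\rangle\in A^\sigma_+$ for regular $x$) is exactly the place where the Hermitian hypothesis enters, and once it is in hand the orthonormalization and stabilizer computation are routine.

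One small remark on the final paragraph: you invoke Hausdorffness of $\PP(A^2)$ to upgrade the continuous bijection to a homeomorphism, but this is not entirely immediate from the quotient definition. A clean way to close this is to observe that your Gram--Schmidt construction depends continuously on the regular input $x$ (the square root on $A^\sigma_+$ is continuous), so the orbit map $\UU_2(A,\sigma)\to\PP(A^2)$ is open, and the induced bijection from $\UU_2(A,\sigma)/(U_{(A,\sigma)}\times U_{(A,\sigma)})$ is then automatically a homeomorphism without appealing separately to Hausdorffness. Alternatively, note that $(\Mat_2(A),\sigma_2)$ with $\sigma_2(M)=\sigma(M)^T$ is itself Hermitian, so compactness of $\UU_2(A,\sigma)=U_{(\Mat_2(A),\sigma_2)}$ follows from the same fact the paper records for $U_{(A,\sigma)}$.
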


\begin{prop}[{\cite[Corollary~4.3]{ABRRW}}]
Let $(A,\sigma)$ be Hermitian and $\omega$ be the standard symplectic form on $A^2$. Then $\KSp_2(A,\sigma)$ acts transitively on $\PP(\Is(\omega))$ with
$$\Stab_{\KSp_2(A,\sigma)}\Clm{1}{0}A=\left\{\begin{pmatrix}u & 0 \\ 0 & u\end{pmatrix}\midwd u \in U_{(A,\sigma)}\right\}\cong U_{(A,\sigma)}.$$
In particular, $\PP(\Is(\omega))$ is homeomorphic to the quotient space:
$$\KSp_2(A,\sigma)/U_{(A,\sigma)}$$
where the group $U_{(A,\sigma)}$ is embedded into $\KSp_2(A,\sigma)$ in the diagonal way.
\end{prop}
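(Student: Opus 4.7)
The plan is to reduce both transitivity and the stabilizer computation to an explicit normalization of a generator of an isotropic line; the homeomorphism then follows from a standard compact-to-Hausdorff argument.

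For transitivity, let $l\in\PP(\Is(\omega))$. By Proposition~4.1 applied to $l$ viewed as an element of $\PP(A^2)$, there is $g\in\UU_2(A,\sigma)$ with $g\Clm{1}{0}A=l$. Its first column $\Clm{a}{c}$ generates $l$, and the $(1,1)$-entry of $\sigma(g)^Tg=\Id$ reads $\sigma(a)a+\sigma(c)c=1$. The isotropy of $\Clm{a}{c}$ gives $\sigma(a)c-\sigma(c)a=0$, so $\sigma(\sigma(a)c)=\sigma(c)a=\sigma(a)c$, i.e.\ $\sigma(a)c\in A^\sigma$. I then take the explicit candidate
$$M=\mtrx{a}{-c}{c}{a}.$$
The symplectic conditions $\sigma(a)c,\;\sigma(-c)a=-\sigma(a)c\in A^\sigma$ and $\sigma(a)a-\sigma(c)(-c)=1$, as well as the unitary relation $\sigma(M)^TM=\Id$, all reduce to the two normalizations just established, so $M\in\KSp_2(A,\sigma)$; and $M\Clm{1}{0}A=l$ by construction. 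For the stabilizer, if $M=\mtrx{a}{b}{c}{d}\in\KSp_2(A,\sigma)$ fixes $\Clm{1}{0}A$ then $c=0$ and $a\in A^\times$; the unitary relations force $\sigma(a)a=1$ and $\sigma(a)b=0$, whence $a\in U_{(A,\sigma)}$ and $b=0$, while the symplectic identity $\sigma(a)d=1$ forces $d=\sigma(a)^{-1}=a$. The reverse inclusion is immediate.

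The orbit map then descends to a continuous bijection $\KSp_2(A,\sigma)/U_{(A,\sigma)}\to\PP(\Is(\omega))$. The source is compact because $\KSp_2(A,\sigma)$ is closed in the compact group $\UU_2(A,\sigma)$, and the target is Hausdorff as a closed subspace of the compact Hausdorff space $\PP(A^2)$ furnished by Proposition~4.1; a continuous bijection between such spaces is a homeomorphism. I expect the only non-routine point to be recognizing that the isotropy of $\Clm{a}{c}$ lifts from the equality $\sigma(a)c=\sigma(c)a$ to the membership $\sigma(a)c\in A^\sigma$---this is precisely what turns the ad hoc candidate $M$ into a symplectic matrix, and it is the one place where the $\sigma$-structure enters in a substantive way; everything else is mechanical.
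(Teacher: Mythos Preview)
Your argument is correct. The paper does not supply its own proof of this proposition, citing instead \cite[Corollary~4.3]{ABRRW}, so there is no in-text argument to compare against directly. Your approach---normalizing a generator of the isotropic line via Proposition~\ref{PP_compact}, then building the explicit element $M=\mtrx{a}{-c}{c}{a}$ from the normalized generator---is the natural one, and since the cited result is labeled a corollary of the corresponding statement for $\UU_2(A,\sigma)$ acting on $\PP(A^2)$, it is very likely the same argument as in the source. The verification that $M\in\KSp_2(A,\sigma)$, the stabilizer computation, and the compact-to-Hausdorff step are all carried out cleanly; the observation that isotropy $\sigma(a)c=\sigma(c)a$ is equivalent to $\sigma(a)c\in A^\sigma$ is indeed the key point that makes $M$ symplectic.
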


\subsection{Action of \texorpdfstring{$\Sp_2(A,\sigma)$}{Sp2(A,sigma)} on tuples of isotropic lines}
In this section we understand the action of $\Sp_2(A,\sigma)$ on the space of isotropic lines, on transverse pairs of isotropic lines, maximal triples and positive quadruples of isotropic lines and describe invariants of this action. The results stated here are proven in~\cite[Chapter~4]{ABRRW}.

To simplify the notation, we denote $G:=\Sp_2(A,\sigma)$ and $\F:=\PP(\Is(\omega))$. Further, $\ell^+:=(1,0)^TA$, $\ell^-:=(0,1)^TA$, $\ell^1:=(1,1)^TA$. These are three pairwise transverse isotropic lines of $\F$. We denote:
$$\Pp\coloneqq\Stab_G(\ell^+),\;\Pm\coloneqq\Stab_G(\ell^-),$$
$$\L\coloneqq\Pp\cap\Pm=\Stab_G(\ell^+,\ell^-),$$
$$\K\coloneqq\Stab_G(\ell^+,\ell^-,\ell^1).$$

\begin{prop}[Action on isotropic lines]\label{stab1_A} The group $G$ acts transitively on $\F$ and
$$\Pp=\left\{
\begin{pmatrix}
x & xy \\
0 & \sigma(x)^{-1}
\end{pmatrix}
\midwd
x\in A^\times, y\in A^\sigma
\right\},\;\text{and }\;
\Pm=\left\{
\begin{pmatrix}
x & 0 \\
zx & \sigma(x)^{-1}
\end{pmatrix}
\midwd
x\in A^\times, z\in A^\sigma
\right\}.$$
In particular, $\F$ is homeomorphic to the homogeneous space $G/\Pp$.
\end{prop}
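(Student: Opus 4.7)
The plan is to prove transitivity by invoking the earlier result that the maximal compact subgroup already acts transitively, and then to extract both stabilizers by a direct matrix computation combined with the explicit description of $\Sp_2(A,\sigma)$ given just before Theorem~\ref{maxcomp-Sp_R}.

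Transitivity is essentially free: the earlier Corollary asserts that $\KSp_2(A,\sigma)$ acts transitively on $\F=\PP(\Is(\omega))$, and since $\KSp_2(A,\sigma)\subseteq G$, the group $G$ acts transitively as well. For the stabilizer of $\ell^+=(1,0)^TA$, I would take an arbitrary $g=\begin{pmatrix} a & b \\ c & d \end{pmatrix}\in G$ and observe that $g\cdot\ell^+=\ell^+$ is equivalent to $g\cdot(1,0)^T=(1,0)^T\cdot r$ for some $r\in A^\times$ (two regular elements generate the same line precisely when they differ by a unit). This immediately yields $c=0$ and $a=r\in A^\times$. Plugging $c=0$ into the defining relations of $\Sp_2(A,\sigma)$ recalled above, the condition $\sigma(a)c\in A^\sigma$ is automatic, the relation $\sigma(a)d-\sigma(c)b=1$ forces $d=\sigma(a)^{-1}$, and $\sigma(b)d\in A^\sigma$ becomes $y:=\sigma(b)\sigma(a)^{-1}\in A^\sigma$. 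Using $\sigma^2=\Id$ and the anti-morphism property, this last condition rewrites as $b=\sigma(\sigma(b))=\sigma(y\sigma(a))=\sigma(\sigma(a))\sigma(y)=ay$, and renaming $a$ as $x$ gives the asserted parametrization of $\Pp$. The computation for $\Pm$ is symmetric: imposing $g\cdot\ell^-=\ell^-$ forces $b=0$ and $d\in A^\times$; the symplectic relations then give $a=\sigma(d)^{-1}$ and $\sigma(a)c\in A^\sigma$, and with $x:=a$ a short calculation shows that $\sigma(a)c\in A^\sigma$ is equivalent to $z:=ca^{-1}\in A^\sigma$, so that $c=zx$ as claimed.

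For the final identification $\F\cong G/\Pp$, transitivity gives a continuous $G$-equivariant bijection $G/\Pp\to\F$. Since $\F$ is a closed subspace of $\PP(A^2)$ and the latter is compact by Proposition~\ref{PP_compact}, $\F$ is compact Hausdorff; combined with the fact that $G$ is a Lie group and $\Pp$ is a closed subgroup (being the preimage of the closed point $\ell^+\in\F$ under the orbit map), the standard homogeneous-space argument upgrades this bijection to a homeomorphism.

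I do not anticipate any real obstacle: the only step requiring care is the reparametrization $b\mapsto y=\sigma(b)\sigma(a)^{-1}$, whose self-adjointness must be tracked through the anti-morphism $\sigma$. Every other step is a direct substitution into the previously recorded explicit description of $\Sp_2(A,\sigma)$, together with the elementary fact that two regular elements of $A^2$ generate the same line iff they differ by right multiplication by a unit of $A$.
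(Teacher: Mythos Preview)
Your argument is correct. The paper does not supply its own proof of this proposition but refers to \cite[Chapter~4]{ABRRW}; your approach---deducing transitivity from the transitivity of $\KSp_2(A,\sigma)$ already recorded in the preceding subsection, and then reading off the stabilizers by substituting into the explicit matrix description of $\Sp_2(A,\sigma)$---is the natural direct route and matches what one finds in that reference. One small remark: your final paragraph on the homeomorphism $G/\Pp\cong\F$ is slightly imprecise as stated (a continuous bijection onto a compact Hausdorff space need not be a homeomorphism unless the domain is compact), but the conclusion is of course correct via the standard fact that a transitive continuous action of a $\sigma$-compact locally compact group on a locally compact Hausdorff space induces a homeomorphism from $G/\mathrm{Stab}$ onto the orbit; you may want to phrase it that way.
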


\begin{prop}[Action on transverse pairs of isotropic lines]\label{stab2_A}
The group $\Sp_2(A,\sigma)$ acts transitively on pairs of transverse isotropic lines and
$$\L=\left\{
\begin{pmatrix}
x & 0 \\
0 & \sigma(x)^{-1}
\end{pmatrix}
\midwd
x\in A^\times\right\}\cong A^\times.$$
\end{prop}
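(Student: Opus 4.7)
The plan is to prove transitivity first by reducing to the pointwise stabilizer of $\ell^+$, then to identify $\L$ as the intersection of the two parabolics given in Proposition~\ref{stab1_A}.

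For transitivity, fix a pair $(l_1,l_2)$ of transverse isotropic lines. By Proposition~\ref{stab1_A} the group $G$ acts transitively on $\F$, so after applying a suitable element of $G$ we may assume $l_1=\ell^+$. Write $l_2=xA$ for a regular isotropic $x=(x_1,x_2)^T\in A^2$. Transversality of $l_2$ with $\ell^+=(1,0)^TA$ forces $x_2\in A^\times$: indeed if $x_2$ were not invertible, then $(e_1,x)$ would fail to be a basis of $A^2$. Replacing $x$ by $xx_2^{-1}$, we may assume $x=(a,1)^T$. Computing the standard symplectic form gives $\omega(x,x)=\sigma(a)\cdot 1-\sigma(1)\cdot a=\sigma(a)-a$, so isotropy forces $a\in A^\sigma$. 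The upper-triangular element $\bigl(\begin{smallmatrix}1 & -a \\ 0 & 1\end{smallmatrix}\bigr)$ lies in $\Pp$ (take $x=1$, $y=-a$ in Proposition~\ref{stab1_A}) and sends $(a,1)^T$ to $(0,1)^T$, hence $l_2$ to $\ell^-$. This proves transitivity on pairs of transverse isotropic lines.

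For the stabilizer, since by definition $\L=\Stab_G(\ell^+,\ell^-)=\Pp\cap\Pm$, I will simply intersect the explicit descriptions of $\Pp$ and $\Pm$ from Proposition~\ref{stab1_A}. An element $g=\bigl(\begin{smallmatrix}x & xy \\ 0 & \sigma(x)^{-1}\end{smallmatrix}\bigr)\in\Pp$ preserves $\ell^-$ if and only if $g\cdot(0,1)^T=(xy,\sigma(x)^{-1})^T$ spans $\ell^-$. Since $\sigma(x)^{-1}\in A^\times$, this happens precisely when $xy=0$, and as $x\in A^\times$ this forces $y=0$. Therefore
$$\L=\left\{\begin{pmatrix} x & 0\\ 0 & \sigma(x)^{-1}\end{pmatrix}\midwd x\in A^\times\right\},$$
and the assignment $x\mapsto\bigl(\begin{smallmatrix}x & 0 \\ 0 & \sigma(x)^{-1}\end{smallmatrix}\bigr)$ is clearly a group homomorphism with two-sided inverse, hence yields the claimed isomorphism $\L\cong A^\times$.

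I do not expect a serious obstacle; the two substantive points are (i) justifying that transversality of $l_2$ with $\ell^+$ really forces the second component of a representative to be invertible (so that rescaling by $x_2^{-1}$ is legitimate), and (ii) correctly applying the sesquilinearity convention to obtain $\omega(x,x)=\sigma(x_1)x_2-\sigma(x_2)x_1$ and thus the isotropy condition $a\in A^\sigma$. Both are routine given the definitions collected in Section~\ref{sec:symp_gr}.
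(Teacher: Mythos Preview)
Your argument is correct and complete. The paper does not actually contain a proof of this proposition; it is one of several results in Section~\ref{sec:is_islines} for which the author simply cites \cite[Chapter~4]{ABRRW}. Your approach---reduce via Proposition~\ref{stab1_A} to $l_1=\ell^+$, normalize the generator of $l_2$ using transversality to get $(a,1)^T$ with $a\in A^\sigma$, then shear by $\bigl(\begin{smallmatrix}1 & -a\\ 0 & 1\end{smallmatrix}\bigr)\in\Pp$, and finally compute $\L=\Pp\cap\Pm$ directly---is the standard one and is almost certainly what appears in the cited reference. The two points you flag as needing care are handled correctly: for (i), the hypothesis that $A$ is finite-dimensional semisimple ensures that the right inverse of $x_2$ obtained from the basis condition is a two-sided inverse; for (ii), your computation of $\omega(x,x)$ matches the convention $\omega(x,y)=\sigma(x)^T\Omega y$ fixed in the paper.
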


Let $(l_1,l_2,l_3)$ be a triple of pairwise transverse isotropic lines. Because $l_1$ and $l_2$ are transverse, there exist $g\in\Sp_2(A,\sigma)$ such that $g(l_1,l_2)=(\ell^+,\ell^-)$. Because $l_3$ and $l_1$ are transverse, there exist an element $b\in A$ such that $l_3=(b,1)^TA$. Because $l_3$ is isotropic, $b\in A^\sigma$, and because $l_3$ and $l_2$ are transverse, $b\in (A^\sigma)^\times$. The element $b$ is determined by the triple $(l_1,l_2,l_3)$ up to congruence by an element of $A^\times$.

Since the action by congruence preserves $A^\sigma_+$, we can give the following definition:

\begin{df}
A triple $(l_1,l_2,l_3)$ is called \defin{maximal} if the element $b$ defined as above is in $A^\sigma_+$.
\end{df}

\begin{rem}
The notion ``maximal'' is used because if $(A,\sigma)$ is a Hermitian algebra, then for every element of $A^\sigma$ a signature can be assigned which is a bounded integer number. This signature is maximal for elements of $A^\sigma_+$. For more details about the signature we refer to~\cite[Section~2.3]{ABRRW}.
\end{rem}

\begin{prop}[Action on maximal triples of isotropic lines]\label{act_pos_triples}
The group $\Sp_2(A,\sigma)$ acts transitively on maximal triples of isotropic lines and
$$\K=\left\{
\begin{pmatrix}
u & 0 \\
0 & u
\end{pmatrix}
\midwd u\in U_{(A,\sigma)}\right\}\cong U_{(A,\sigma)}.$$
\end{prop}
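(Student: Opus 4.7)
The plan is to reduce to the setting already handled by Proposition~\ref{stab2_A} and then use the explicit description of $\L$ to move the third line into the canonical position.

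First I would prove transitivity. Given a maximal triple $(l_1,l_2,l_3)$, the first two lines are in particular transverse, so by Proposition~\ref{stab2_A} there exists $g\in G$ with $g(l_1)=\ell^+$, $g(l_2)=\ell^-$. By the same discussion that led to the definition of ``maximal'' above the proposition, $g(l_3)=(b,1)^T A$ for some $b\in A^\sigma_+$, and maximality is preserved by the $G$-action (it is defined purely through the congruence class of $b$). Since $b\in A^\sigma_+$, by definition $b=a^2$ for some $a\in (A^\sigma)^\times$. Now consider the element
\[
h=\begin{pmatrix} a^{-1} & 0 \\ 0 & a \end{pmatrix}\in\L,
\]
which is well defined because $a$ is $\sigma$-symmetric and invertible, so $\sigma(a^{-1})^{-1}=a$. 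Applying $h$ to the column $(b,1)^T=(a^2,1)^T$ gives $(a^{-1}a^2,\,a)^T=(a,a)^T$, which generates the same right $A$-line as $(1,1)^T$. Thus $hg$ sends $(l_1,l_2,l_3)$ to $(\ell^+,\ell^-,\ell^1)$, proving transitivity.

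Next I would compute the stabilizer $\K$. Any element of $\K$ already stabilizes $(\ell^+,\ell^-)$, hence by Proposition~\ref{stab2_A} has the form $\diag(x,\sigma(x)^{-1})$ for some $x\in A^\times$. The condition that it also fixes $\ell^1=(1,1)^T A$ means $(x,\sigma(x)^{-1})^T$ and $(1,1)^T$ generate the same line, i.e.\ there exists $r\in A^\times$ with $x=r$ and $\sigma(x)^{-1}=r$. Combining these gives $\sigma(x)^{-1}=x$, that is $x\in U_{(A,\sigma)}$. Conversely any such $x$ manifestly fixes all three lines, so
\[
\K=\left\{\begin{pmatrix} u & 0 \\ 0 & u \end{pmatrix}\midwd u\in U_{(A,\sigma)}\right\}\cong U_{(A,\sigma)}.
\]

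The only step requiring care is the choice of $h$: one must check it really lies in $\Sp_2(A,\sigma)$ (the computation $\sigma(a^{-1})a=1$ uses $\sigma(a)=a$) and that the factorization $b=a^2$ with $a\in (A^\sigma)^\times$ is available, which is exactly the content of Definition~\ref{df:cone} of $A^\sigma_+$. Everything else is formal manipulation with the stabilizers described in Propositions~\ref{stab1_A} and~\ref{stab2_A}; no deeper property of Hermitian algebras is needed at this point, since the structure of the positive cone $A^\sigma_+$ is precisely what the definition of a maximal triple was tailored to exploit.
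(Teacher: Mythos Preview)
Your proof is correct. The paper itself does not give a proof of this proposition; it states it in Section~\ref{sec:is_islines} together with the surrounding propositions and defers all of them to \cite[Chapter~4]{ABRRW}. Your argument---first normalizing $(l_1,l_2)$ to $(\ell^+,\ell^-)$ via Proposition~\ref{stab2_A}, then writing the resulting $b\in A^\sigma_+$ as $a^2$ with $a\in(A^\sigma)^\times$ and conjugating by $\diag(a^{-1},a)\in\L$ to move $l_3$ to $\ell^1$, and finally reading off the stabilizer from the form of $\L$---is exactly the natural computation and is what one expects the cited proof to contain.
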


\begin{df}
A quadruple $(l_1,l_2,l_3,l_4)$ of isotropic lines is called \defin{positive} if the triples $(l_1,l_2,l_3)$ and $(l_1,l_3,l_4)$ are maximal.
\end{df}

\begin{prop}[Action on positive quadruples of isotropic lines]\label{act_pos_quadr}
Let $(l_1,l_2,l_3,l_4)$ be a positive quadruple. There exists an element $g\in\Sp_2(A,\sigma)$ and $b\in A^\sigma_+$ such that
$g(l_1,l_2,l_3,l_4)=(\ell^+,\ell(b),\ell^-,\ell^1)$ where $\ell(b)=(1,-b)^TA$.
The element $g$ is unique up to the left multiplication by an element of $\K$. The element $b$ is determined by the quadruple $(l_1,l_2,l_3,l_4)$ uniquely up to conjugation by an element of $U_{(A,\sigma)}$.
\end{prop}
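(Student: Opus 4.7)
The plan is to send three of the four lines into standard position using the transitive action on maximal triples (Proposition~\ref{act_pos_triples}), read $b$ off the image of the remaining line, and finally translate the second maximality hypothesis into the positivity of $b$.

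Since the quadruple is positive, the triple $(l_1,l_3,l_4)$ is maximal, so Proposition~\ref{act_pos_triples} supplies a $g\in\Sp_2(A,\sigma)$ with $g(l_1,l_3,l_4)=(\ell^+,\ell^-,\ell^1)$, unique up to left multiplication by $\K\cong U_{(A,\sigma)}$; this already delivers the stated uniqueness of $g$. The image $g(l_2)$ is then isotropic and transverse to both $\ell^+$ and $\ell^-$: any line transverse to $\ell^+$ has the form $(a,1)^T A$, isotropy forces $a\in A^\sigma$, and transversality to $\ell^-$ forces $a\in(A^\sigma)^\times$. Setting $b=-a^{-1}$, I get $g(l_2)=\ell(b)$ for a unique $b\in(A^\sigma)^\times$.

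To promote $b$ to an element of $A^\sigma_+$, I will use that $(l_1,l_2,l_3)$ is maximal, hence so is its image $(\ell^+,\ell(b),\ell^-)$. I plan to take $h=\begin{pmatrix}1 & b^{-1}\\ 0 & 1\end{pmatrix}$, which lies in $\Pp$ by Proposition~\ref{stab1_A}, and verify by direct computation that $h(\ell^+,\ell(b),\ell^-)=(\ell^+,\ell^-,(b^{-1},1)^T A)$. By the very definition of a maximal triple, this forces $b^{-1}\in A^\sigma_+$, and hence $b\in A^\sigma_+$ because the cone $A^\sigma_+=\{a^2\mid a\in(A^\sigma)^\times\}$ is closed under inversion. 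For the last uniqueness statement, replacing $g$ by $kg$ with $k=\diag(u,u)\in\K$ sends $\ell(b)=(1,-b)^T A$ to $(u,-ub)^T A=(1,-ubu^{-1})^T A=\ell(ubu^{-1})$, so $b$ is determined by the quadruple precisely up to conjugation by $U_{(A,\sigma)}$.

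The main obstacle I foresee is this middle step, namely translating maximality of the image triple $(\ell^+,\ell(b),\ell^-)$ into the condition $b\in A^\sigma_+$. It reduces to producing the explicit $h\in\Pp$ above and invoking the stability of $A^\sigma_+$ under inversion and under $A^\times$-congruence recorded in Section~\ref{sec:symp_gr}; everything else is a straightforward packaging of the transitive actions established in Propositions~\ref{stab1_A}--\ref{act_pos_triples}.
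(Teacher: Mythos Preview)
The paper does not actually prove Proposition~\ref{act_pos_quadr}; it states the result and refers to \cite[Chapter~4]{ABRRW} for the proof, so there is no in-paper argument to compare against. That said, your proof is correct and is the natural one: normalize the maximal triple $(l_1,l_3,l_4)$ via Proposition~\ref{act_pos_triples}, read off $b$ from the position of $g(l_2)$, and then use the unipotent element $h\in\Pp$ to translate maximality of $(\ell^+,\ell(b),\ell^-)$ into $b^{-1}\in A^\sigma_+$, hence $b\in A^\sigma_+$. The verification that $A^\sigma_+$ is stable under inversion is immediate from its definition, and your computation of the effect of $k=\diag(u,u)\in\K$ on $\ell(b)$ correctly yields the conjugation action of $U_{(A,\sigma)}$ on $b$.
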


\section{Topological data}\label{sec:top_data}

\subsection{Punctured surfaces}
Let $\bar S$ be a compact oriented smooth surface of finite type with or without boundary. Let $P$ be a nonempty finite subset of points of $\bar S$ such that on every boundary component there is at least on element of $P$. We define $S\coloneqq\bar S\bs P$. We assume that the Euler characteristic $\chi(S)$ of $S$ is negative or that $\bar S$ is diffeomorphic to a closed disc and there are at least three punctures. Surfaces that can be obtained in this way are called \defin{punctured surfaces} (some authors call them also ciliated surfaces, e.g.~\cite{palesi}). Elements of $P$ are called \defin{punctures} of $S$. Sometimes we will distinguish between elements of $P$ that lie in the interior of $\bar S$ -- \defin{internal punctures} and that lie on the boundary -- \defin{external punctures}.

Every punctured surface can be equipped with a complete hyperbolic structure of finite volume with geodesic boundary. For every such hyperbolic structure all the punctures are cusps and all boundary curves are (infinite) geodesics. Once equipped with a hyperbolic structure as above, the universal covering $\tilde S$ of $S$ can be seen as closed subset of the hyperbolic plane $\HH^2$ which is invariant under the natural action of $\pi_1(S)$ on $\HH^2$ by the holonomy representation. Punctures $P$ of $S$ are lifted to points of the ideal boundary of $\HH^2$ which we call punctures of $\tilde S$ and denote by $\tilde P\subset \partial_\infty \tilde S\subseteq \partial_\infty\HH^2$. Notice that if $\bar S$ has no boundary, then $\tilde S$ is the entire $\HH^2$.

\subsection{Ideal triangulations}
An \defin{ideal triangulation} of $S$ is a triangulation of $\bar S$ whose set of vertices agrees with $P$. We always consider edges of an ideal triangulation as homotopy classes of (non-oriented) paths connecting points in $P$ (relative to its endpoints). Connected components of the complement on $S$ to all edges of an ideal triangulation $\mathcal T$ are called faces or triangles of $\mathcal T$. Every edge belongs to the boundary of one or two triangles. In the first case, an edge is called \defin{external}, in the second -- \defin{internal}. We denote by $E$, resp. $E_{in}$, resp. $E_{ex}$ the set of all edges, resp. all internal edges, resp. all external edges of $\T$. We denote by $T$ the set of faces of $\T$ and fix a total order $<$ on $T$. Any ideal triangulation of $S$ can be realized as an ideal geodesic triangulation as soon as a hyperbolic structure as above on $S$ is chosen. We fix a hyperbolic structure of finite volume on $S$ and assume that edges of $\T$ are geodesics.

The following lemma counts the number of triangles of an ideal triangulation of $S$:

\begin{lem}
Let $S$ be a surface of genus $g$ with $p_i$ internal punctures and $p_e$ external punctures. Let $m$ be the number of boundary components of $\bar S$, then
$$\#T=4g-4+2p_i+2m+p_e=p_e-2\chi(S).$$
where $\chi(S)=2-2g-p_i-m$ is the Euler characteristic of $S$.
\end{lem}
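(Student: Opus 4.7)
The plan is to use the Euler characteristic relation applied to the CW decomposition of the compact surface $\bar S$ coming from the ideal triangulation $\mathcal T$. Writing $V$, $E$, $F$ for the numbers of vertices, edges and faces, the vertex set is $P$ so $V = p_i + p_e$, the faces are the triangles so $F = \#T$, and one has $V - E + F = \chi(\bar S) = 2 - 2g - m$. The task is therefore to express $E$ in terms of $\#T$ together with the boundary data, and then solve.

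The next step is to count external edges. An external edge lies on $\partial \bar S$ by definition, and each boundary component is a topological circle whose combinatorial vertices in $\mathcal T$ are exactly the external punctures it contains; since each boundary component carries at least one puncture by hypothesis, these vertices partition the circle into as many arcs as there are punctures on it. Summing over boundary components yields $\#E_{ex} = p_e$. After this, a standard double-count of triangle/edge incidences gives
$$3\#T = 2\#E_{in} + \#E_{ex} = 2\#E_{in} + p_e,$$
since every triangle has three sides, each internal edge is shared by two triangles, and each external edge bounds a single triangle. Hence $\#E_{in} = (3\#T - p_e)/2$ and $E = (3\#T + p_e)/2$. Plugging this into the Euler relation and solving produces
$$\#T = 4g - 4 + 2p_i + 2m + p_e,$$
which is the first equality.

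For the second equality I would just verify that $\chi(S) = 2 - 2g - m - p_i$: removing an interior point from a surface lowers $\chi$ by $1$ (a punctured disc deformation retracts to a circle), while removing a boundary point leaves $\chi$ unchanged (a disc minus a boundary point is still contractible). Thus only the $p_i$ internal punctures contribute, so $\chi(S) = \chi(\bar S) - p_i$, and a direct substitution transforms $4g - 4 + 2p_i + 2m + p_e$ into $p_e - 2\chi(S)$. The proof is essentially bookkeeping; the only real subtlety, and hence the ``main obstacle'', is the careful distinction between internal and external punctures at two different points of the argument — external punctures appear on $\partial\bar S$ and contribute the external edges $\#E_{ex}=p_e$, while only internal punctures lower the Euler characteristic when passing from $\bar S$ to $S$.
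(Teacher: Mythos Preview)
Your proof is correct. The paper states this lemma without proof, so there is nothing to compare against; your Euler-characteristic bookkeeping via the CW decomposition of $\bar S$, the double count $3\#T = 2\#E_{in} + \#E_{ex}$, and the observation that only internal punctures change $\chi$ when passing from $\bar S$ to $S$ is exactly the standard argument one would expect here.
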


\subsection{Graph \texorpdfstring{$\Gamma$}{Gamma}}\label{sec:Graph_Gamma}

We construct the following graph $\Gamma$ on $S$. The set of vertices of $\Gamma$ is $V_\Gamma\coloneqq\{(\tau,r)\in T\times E\mid r\subset\bar \tau\}$. A vertex $(\tau,r)\in V_\Gamma$ can be seen as a point in the triangle $\tau\in T$ lying close to the internal edge $r\in E$.

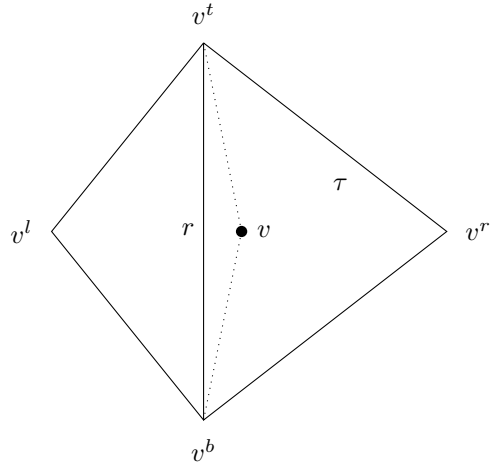
\begin{figure}[ht]
\begin{tikzpicture}

\draw (-3,3) node[label = $v^t$] (v1) {} -- (-3,-2) node[label = below:$v^b$] (v2) {} -- (0.2,0.5) node[label= right:$v^r$] (v3) {} -- (-3,3);
\draw (-3,3) node[ ] (v1) {} -- (-5,0.5) node[label = left:$v^l$] (v2) {} -- (-3,-2) node[ ] (v3) {};

\draw [dotted](-3,3) -- (-2.5,0.5);
\draw [dotted](-3,-2) -- (-2.5,0.5);
\node[fill, circle, inner sep = 1.5pt, label= right:$v$] at (-2.5,0.5) {};
\node[label= $r$] at (-3.2,0.2) {};
\node[label= $\tau$] at (-1.2,0.8) {};

\end{tikzpicture}
\caption{Definition of $v^t$, $v^b$, $v^r$ and $v^l$ for $v\in V_\Gamma$}\label{vt-vb}
\end{figure}

Let $v=(\tau,r)\in V_\Gamma$ and the edge $r$ connects two punctures $p,p'\in P$. If we connect $v$ with $p$ and $p'$ by two simple non intersecting segments in $\tau$, we obtain a triangle with vertices $v,p,p'$ in $\tau$. We denote $v^t\coloneqq p$, $v^b\coloneqq p'$ if the orientation of the triangle $(v,p,p')$ agrees with the orientation of the surface (see Figure~\ref{vt-vb}). Further we define $v^r$ as the unique puncture of $P$ such that $(v^t,v^b,v^r)=\tau$. If $r$ is an internal edge of $\mathcal T$, we also define $v^l$ as the unique puncture of $P$ such that $(v^b,v^t,v^l)$ is a triangle of $\mathcal T$. This triangle is adjacent to $\tau$ along $r$.

\begin{figure}[ht]
\begin{tikzpicture}[every node/.style={fill, circle, inner sep = 1.5pt}]
\draw (-5,0) -- (1,4.5) -- (7,0.5) -- (1,-3.5) -- (-5,0);
\draw (1,4.5) -- (1,-3.5);

\node[white, label=$\tau$] at (2,3) {};
\node[white, label=$\tau'$] at (0.2,3) {};

\node[label=above left:$v_4$](v4) at (1.8,0.5) {};
\node[label=above right:$v_3$](v3) at (-2+0.3,2-0.3) {};
\node[label=above right:$v_5$](v5) at  (3.5-0.2,-1.5+0.2) {};
\node[label=below:$v_2$](v2) at (0.2,0.5) {};
\node[label=right:$v_1$](v1) at (-2,-1) {};
\node[label=right:$v_6$](v10) at (3.5,2) {};

\draw[middlearrow={latex}] (v1) to (v2) ;


\draw[middlearrow={latex}] (v2) to (v3) ;

\draw[middlearrow={latex}] (v3) to (v1) ;

\draw[middlearrow={latex}] (v4) to (v5) ;
\draw[middlearrow={latex}] (v5) to (v10) ;
\draw[middlearrow={latex}] (v10) to (v4) ;

\node (v6) at (-2.5,2.5) {};
\node (v7) at (4,-2) {};
\node (v8) at (-2.2,-2.2) {};

\draw[middlearrow={latex}] (v4) to (v2) ;

\draw[middlearrow={latex}] (v3) to (v6) ;


\draw[middlearrow={latex}] (v5) to (v7) ;

\draw[middlearrow={latex}] (v8) to (v1) ;

\end{tikzpicture}
\caption{Example of a graph $\Gamma$. Here the upper right edge of the triangulation is external, all other edges are internal and $\tau<\tau'$.}\label{Graph_Gamma}
\end{figure}

We now describe the set of oriented edges $E_\Gamma^+$ of $\Gamma$. First we fix the following notation: the oriented edge from the vertex $v$ to the vertex $v'$ is denoted by $(v\to v')$ and $(v'\gets v)$.
\begin{itemize}
    \item Let $v=(\tau_1,r)$, $v'=(\tau_2,r)$ and $\tau_1<\tau_2$, then $(v \to v')$ is an edge of $\Gamma$. It crosses the internal edge $r$ of the triangulation $\mathcal T$.
    \item Let $v_1$ and $v_2$ be two vertices lying in one triangle such that $v_1^b=v_2^t$, then $(v_1\to v_2)$ is an edge of $\Gamma$ (see Figure~\ref{Graph_Gamma}).
\end{itemize}

\begin{rem}
The graph $\Gamma$ has no multiple edges and no two-cycles.
\end{rem}

We also denote $E^-_\Gamma\coloneqq \{(v'\gets v)\mid (v\gets v')\in E^+_\Gamma\}$. Since $\Gamma$ has no two-cycles, $E^+_\Gamma\cap E^-_\Gamma=\emptyset$. We denote $E_\Gamma\coloneqq E^+_\Gamma\cup E^-_\Gamma$.

We denote by $\tilde\Gamma$ the lift of $\Gamma$ to the universal covering $\tilde S\subset \HH^2$ of $S$. We also choose a point $b\in V_\Gamma$ and fix one of its lifts $\tilde b\in V_{\tilde\Gamma}$. Assume $b\in \tau_0$ and $\tilde b\in \tilde \tau_0$ for $\tau_0\in T$ and $\tilde \tau_0\in \tilde T$.

\begin{rem}
Every path on $S$ that starts and ends in $V_\Gamma$ can be deformed to a non-oriented path in $\Gamma$.
\end{rem}

\section{Framed representations and framed local systems}\label{sec:framed_reps}
\subsection{Transverse and maximal framed representations}
Let $S$ be a punctured surface, and let $(A,\sigma)$ be a Hermitian algebra. As before, we denote $G:=\Sp_2(A,\sigma)$ and $\F:=\PIs$.

\begin{df}
The space of all homomorphisms $\rho\colon\pi_1(S)\to G$ is denoted by $\Hom(\pi_1(S),G)$. The group $G$ acts on $\Hom(\pi_1(S),G)$ by conjugation. The quotient space is denoted by $\Rep(\pi_1(S),G):=\Hom(\pi_1(S),G)/G$.
\end{df}

\begin{df}
A \defin{framing} is a map $F\colon \tilde P\to \F$. Let $\rho\colon\pi_1(S)\to G$ be a homomorphism. A \defin{framing of $\rho$} is a $\pi_1(S)$-equivariant framing $F\colon \tilde P\to \F$, i.e. for every $\gamma\in\pi_1(S)$, $F(\gamma(\tilde p))=\rho(\gamma)F(\tilde p)$ for all $\tilde p\in \tilde P$. A \defin{framed homomorphism} is a pair $(\rho,F)$ where $F$ is a framing of $\rho$.

The space of all framed homomorphisms is denoted by $\Hom^{\fr}(\pi_1(S),G)$. The space
$$\Rep^{\fr}(\pi_1(S),G)\coloneqq \Hom^{\fr}(\pi_1(S),G)/G$$
is called the \defin{space of framed representations}. A \defin{framed representation} is an element of $\Rep^{\fr}(\pi_1(S),G)$.
\end{df}

\begin{rem}
Notice that not every homomorphism $\rho\colon\pi_1(S)\to G$ admit a framing. Homomorphisms that admit a framing are usually called \defin{peripherally parabolic}.
\end{rem}

Let $\T$ be an ideal triangulation of $S$ and $\tilde\T$ is the lift of $\T$ to the universal covering $\tilde S$.

\begin{df}\label{df:transverse_rep}
A framed homomorphism $(\rho,F)$ is called \defin{$\T$-transverse} if for every two punctures $\tilde p_1,\tilde p_2\in\tilde P$ that are connected by an edge of $\tilde\T$, the isotropic lines $F(\tilde p_1)$ and $F(\tilde p_2)$ are transverse.

The space of all $\T$-transverse framed homomorphisms is denoted by $\Hom^{\fr}_\T(\pi_1(S),G)$. The space
$$\Rep^{\fr}_\T(\pi_1(S),G)\coloneqq \Hom^{\fr}_\T(\pi_1(S),G)/G$$
is called the \defin{space of $\T$-transverse framed representations}.
\end{df}

\begin{df} A framing $F$ is called \defin{maximal} if for every cyclically oriented triple $(\tilde p_1,\tilde p_2,\tilde p_3)\in\tilde P^3$, the triple of isotropic lines $(F(\tilde p_1),F(\tilde p_2),F(\tilde p_3))$ is maximal.
A homomorphism $\rho\in\Hom(\pi_1(S),G)$ is called \defin{maximal} if it admits a maximal framing $F$. The pair $(\rho, F)$ is called a \defin{maximal framed homomorphism}.

The space of all maximal framed homomorphisms is denoted by $\Hom^{\fr}_+(\pi_1(S),G)$. The space
$$\Rep^{\fr}_+(\pi_1(S),G)\coloneqq \Hom^{\fr}_+(\pi_1(S),G)/G$$
is called the \defin{space of maximal framed representations}.
\end{df}

\begin{rem}
From this definition follows immediately that maximal representations are transverse with respect to any ideal triangulation of $S$.
\end{rem}

\begin{rem}\label{max.Hausdorff}
Notice, that in general not all orbits of the $G$-actions on $\Hom(\pi_1(S),G)$ and on $\Hom^{\fr}(\pi_1(S),G)$ are closed. So the quotient spaces  $\Rep(\pi_1(S),G)$ and $\Rep^{\fr}(\pi_1(S),G)$ are not Hausdorff. However the space of maximal representations (that will be defined in the next section) inside $\Rep(\pi_1(S),G)$ and the space of maximal framed representations $\Rep^{\fr}_+(\pi_1(S),G)$ inside $\Rep^{\fr}(\pi_1(S),G)$ form Hasudorff subspaces.
\end{rem}

In fact, to check if the framing is maximal, it is enough to check it only for all positively oriented triangles of one triangulation and not for all possible cyclically oriented triples in $\tilde P$.

\begin{prop}
Let $\mathcal T$ be a triangulation of $S$ and let $F$ be a framing such that for every positively oriented triangle $(\tilde p_1,\tilde p_2,\tilde p_3)$ of $\tilde{\mathcal T}$  the triple of isotropic lines $(F(\tilde p_1),F(\tilde p_2),F(\tilde p_3))$ is maximal. Then the framing $F$ is maximal.
\end{prop}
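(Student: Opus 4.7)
The plan is to deduce maximality of every cyclically oriented triple in $\tilde P$ from maximality on the faces of $\tilde{\mathcal T}$ by propagating across shared edges via a ``flip'' lemma, and the key ingredient for the flip is the normal form for positive quadruples provided by Proposition~\ref{act_pos_quadr}.

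First, I would record that maximality of a triple is invariant under cyclic permutation. Using Proposition~\ref{stab2_A} to send any pair $(l_1,l_2)$ to $(\ell^+,\ell^-)$, the third line takes the form $(b,1)^T A$ with $b\in (A^\sigma)^\times$, and $(l_1,l_2,l_3)$ is maximal iff $b\in A^\sigma_+$; applying $\begin{pmatrix}0 & 1\\ -1 & 0\end{pmatrix}\in\Sp_2(A,\sigma)$ cyclically permutes $(\ell^+,\ell^-,(b,1)^TA)$ to $(\ell^-,(b,1)^TA,\ell^+)$, after which renormalization and the invariance of $A^\sigma_+$ under congruence (see the remark after Definition~\ref{Herm_A}) show the new invariant is again in $A^\sigma_+$.

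Next I would prove the central \emph{flip lemma}: if $(l_1,l_2,l_3)$ and $(l_1,l_3,l_4)$ are both maximal, then so are $(l_2,l_3,l_4)$ and $(l_1,l_2,l_4)$. By Proposition~\ref{act_pos_quadr} we may normalize the positive quadruple to $l_1=\ell^+$, $l_3=\ell^-$, $l_4=\ell^1$, $l_2=\ell(b)$ with $b\in A^\sigma_+$. A direct matrix computation then expresses the invariants of $(l_2,l_3,l_4)$ and $(l_1,l_2,l_4)$ in terms of $b$, and using that $A^\sigma_+$ is a convex cone stable under $A^\times$-congruence, one checks these invariants lie in $A^\sigma_+$.

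Finally I would run the induction. Given a positively oriented triple $(\tilde p_1,\tilde p_2,\tilde p_3)\in \tilde P^3$, the three bounding ideal geodesics in $\tilde S$ cut out a region that is subdivided by $\tilde{\mathcal T}$ into finitely many triangles (finiteness follows from the compactness of $\bar S$ together with the fact that each ideal geodesic crosses only finitely many edges of $\tilde{\mathcal T}$). Starting from any triangle of $\tilde{\mathcal T}$ inside this region, where maximality holds by hypothesis, successive applications of the flip lemma across internal edges enlarge the set of vertices for which maximality is known, until it covers the full triple $(\tilde p_1,\tilde p_2,\tilde p_3)$; cyclic invariance from the first step then handles the orientation bookkeeping.

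The main obstacle is the flip lemma in the second step, since it requires an explicit computation with the normal form and relies crucially on the convex cone and congruence-invariance properties of $A^\sigma_+$; once this algebraic identity is established, the combinatorial induction across the triangulation is routine.
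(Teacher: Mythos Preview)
The paper gives no proof of its own here; it refers to \cite[Section~4.12]{AGRW} for $G=\Sp(2n,\R)$ and says the general case is the same. Your three-step outline (cyclic invariance, a flip lemma via the normal form of Proposition~\ref{act_pos_quadr}, then a combinatorial induction over the triangulation) is precisely the standard route and matches what that reference does.

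Two details need repair, though neither is a structural gap. In step~1 your matrix $\left(\begin{smallmatrix}0&1\\-1&0\end{smallmatrix}\right)$ does not cyclically permute $(\ell^+,\ell^-,(b,1)^TA)$; it swaps $\ell^+$ and $\ell^-$ and sends $(b,1)^TA$ to $(1,-b)^TA$. Cyclic invariance follows more cleanly from Proposition~\ref{act_pos_triples}: by transitivity on maximal triples it suffices to check one triple, and the element $\left(\begin{smallmatrix}-1&1\\-1&0\end{smallmatrix}\right)$ used in Section~\ref{Turn} cyclically permutes $(\ell^+,\ell^-,\ell^1)$. In step~3, the ideal triangle on $\tilde p_1,\tilde p_2,\tilde p_3$ is not literally ``subdivided by $\tilde{\mathcal T}$'', since its sides are in general not edges of $\tilde{\mathcal T}$; what one actually uses is a finite triangulated polygon built from triangles of $\tilde{\mathcal T}$ having $\tilde p_1,\tilde p_2,\tilde p_3$ among its cyclically ordered boundary vertices, and the flip induction then shows every cyclically ordered boundary sub-triple is maximal. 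Your finiteness justification (``compactness of $\bar S$'') is also too quick, as the relevant geodesics are not compact; the correct observation is that near each cusp such a geodesic eventually lies in a single triangle of $\tilde{\mathcal T}$, so only the remaining compact portion can cross edges.
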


This statement is proven  in~\cite[Section~4.12]{AGRW} for $G=\Sp(2n,\R)$. The proof in general case is similar.

\subsection{Maximal representations into Hermitian Lie groups of tube type}

In this section we assume that $S$ has no boundary, i.e. that all punctures are internal. We also assume that $G$ is a Hermitian Lie group of tube type (not necessarily $\Sp_2(A,\sigma)$).

An important invariant of a homomorphism $\rho\colon \pi_1(S)\to G$ is the Toledo invariant $T_\rho$, which was introduced in~\cite{BIW} using bounded cohomology. It is important to emphasize that the Toledo number depends on the topological surface $S$ and not only on its fundamental group. It is a real number which satisﬁes the Milnor–Wood inequality:
$$-\rk_\R(G)|\chi(S)|\leq T_\rho\leq  \rk_\R(G)|\chi(S)|$$
where $\rk_\R$ is the real rank of the group $G$. Moreover, $T_\rho$ is invariant under the action of $G$ on $\Hom(\pi_1(S),G)$ by conjugation. So it is well defined for representations in $\Rep(\pi_1(S),G)$.

\begin{df}
A representation $\rho\in\Rep(\pi_1(S),G)$ is called \defin{maximal} if $T_\rho=\rk_\R(G)|\chi(S)|$. The subspace of all maximal representations in $\Rep(\pi_1(S),G)$ is denoted by $\Rep_+(\pi_1(S),G)$.
\end{df}

Maximal representations have nice geometric properties that we state in the next proposition. They were proved in~\cite{BIW} and in~\cite{AGRW} for symplectic representations. The general proof for Hermitian Lie groups of tube type is similar.

\begin{prop}\label{max.prop} Let $S$ be a punctured surface without boundary and $G$ be a Hermitian Lie group of tube type.
\begin{itemize}
    \item Every maximal homomorphism $\rho$ is discrete and faithful, i.e. $\rho$ has no kernel and the image of $\rho$ is discrete in $G$.
    \item Maximal homomorphisms are reductive; hence $\Rep_+(\pi_1(S),G)$ is Hausdorff (see Remark~\ref{max.Hausdorff}).
    \item Let $G=\Sp_2(A,\sigma)$ where $(A,\sigma)$ is a Hermitian algebra. Every maximal homomorphism $\rho\colon \pi_1(S)\to G$ admit a framing. Every framing of $\rho$ is transverse with respect to any ideal triangulation of $S$. Let $F$ be a framing of $\rho$, then $(\rho,F)$ is a maximal framed representation.
\end{itemize}
\end{prop}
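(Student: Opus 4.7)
The plan is to adapt the Burger--Iozzi--Wienhard strategy, running the argument with the Shilov boundary of $G/K$ identified with $\F=\PIs$ when $G=\Sp_2(A,\sigma)$ and with the abstract notion of ``maximal triple'' replaced by the concrete one of Section~\ref{sec:is_islines}. The central object is a $\rho$-equivariant measurable boundary map $\phi\colon\bdS\to\F$ that sends positively oriented triples of boundary points to maximal triples in $\F$. One obtains $\phi$ from the fact that saturation of the Milnor--Wood inequality $|T_\rho|\le\rk_\R(G)|\chi(S)|$ forces the pullback of the bounded K\"ahler cocycle on $\F^3$ to coincide almost everywhere with the Bergmann cocycle; standard boundary-map machinery then yields $\phi$ with the stated positivity on triples.

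With $\phi$ in hand, discreteness and faithfulness follow by a standard dynamical argument: a nontrivial element of $\ker\rho$, or an accumulation $\rho(\gamma_n)\to 1$, would force $\phi$ to collapse points of $\bdS$ that sit in general position inside some positively oriented triple, contradicting the pairwise transversality built into the maximal triple condition (Propositions~\ref{stab2_A} and~\ref{act_pos_triples}). For reductivity, I would look at the Zariski closure $H<G$ of $\rho(\pi_1(S))$ and show that its unipotent radical $U\lhd H$ would have to fix $\phi(\xi)$ for almost every $\xi\in\bdS$; but then $U$ would stabilize a maximal triple $(\phi(\xi_1),\phi(\xi_2),\phi(\xi_3))$, whose stabilizer is the compact group $\K\cong U_{(A,\sigma)}$ by Proposition~\ref{act_pos_triples}, forcing $U=\{1\}$. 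Hausdorffness of $\Rep_+(\pi_1(S),G)$ is then the usual consequence of the closedness of conjugation orbits of reductive representations, as recorded in Remark~\ref{max.Hausdorff}.

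For the third bullet, fix $G=\Sp_2(A,\sigma)$. Each puncture $p\in P$ corresponds to a conjugacy class of peripheral elements $\gamma_p\in\pi_1(S)$ whose unique ideal fixed point $\tilde p\in\bdS\cap\tilde P$ is mapped by $\phi$ to a $\rho(\gamma_p)$-fixed isotropic line; setting $F(\tilde p)\coloneqq\phi(\tilde p)$ and extending $\pi_1(S)$-equivariantly defines a framing of $\rho$. For any other framing $F'$ of $\rho$, equivariance forces $F'(\tilde p)$ to be a $\rho(\gamma_p)$-fixed isotropic line, and maximality of $\rho$ ensures that $\rho(\gamma_p)$ is proximal on $\F$ with a unique fixed isotropic line, so that $F'=\phi|_{\tilde P}$. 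Transversality and maximality of $(\rho,F')$ with respect to any ideal triangulation $\T$ then follow directly from the defining property of $\phi$: any edge of $\tilde\T$ can be completed to a positively oriented triangle of vertices in $\tilde P$, whose image under $\phi$ is a maximal triple and hence has transverse components (Proposition~\ref{act_pos_triples}); and any positively oriented triangle of $\tilde\T$ is a positively oriented triple in $\bdS$, whose image under $\phi$ is maximal by construction.

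The main obstacle is the very first step, producing $\phi$ and establishing maximality of $\phi$ on positive triples; for closed surfaces this is Burger--Iozzi--Wienhard, and for a punctured surface without boundary one additionally needs to argue in relative bounded cohomology and to control the behaviour of $\phi$ at cusps, which is precisely where the assumption that $S$ has no boundary is used. The analogous statements for $G=\Sp(2n,\R)$ are worked out in~\cite{AGRW}, and the general Hermitian case requires no new ideas once one has identified, via Theorem~\ref{model_of_ss} and Proposition~\ref{stab1_A}, the Shilov boundary of $G/K$ with $\F$ and the maximal triples with those defined in Section~\ref{sec:is_islines}.
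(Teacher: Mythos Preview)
The paper does not give its own proof of this proposition: it only remarks that the statements ``were proved in~\cite{BIW} and in~\cite{AGRW} for symplectic representations'' and that ``the general proof for Hermitian Lie groups of tube type is similar.'' Your sketch is indeed a reasonable outline of that Burger--Iozzi--Wienhard strategy, so in spirit you are following the approach the paper points to.

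There is, however, a genuine gap in your treatment of the third bullet. You assert that ``maximality of $\rho$ ensures that $\rho(\gamma_p)$ is proximal on $\F$ with a unique fixed isotropic line, so that $F'=\phi|_{\tilde P}$.'' This is not justified and is in general false: for maximal representations of a punctured surface the peripheral monodromy $\rho(\gamma_p)$ is only guaranteed to fix \emph{some} point of the Shilov boundary, not a unique one. Already for $\Sp(2n,\R)$ a unipotent peripheral element of the form $\begin{pmatrix} \Id & B\\ 0 & \Id\end{pmatrix}$ with $B\in\Sym(n,\R)$ singular fixes many Lagrangians, so a maximal $\rho$ can admit several framings. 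The proposition as stated explicitly allows this (``Let $F$ be a framing of $\rho$''), and the conclusion you must reach is that \emph{every} such $F$ yields a maximal framed pair, not that $F$ is unique. The argument in~\cite{AGRW} does not go through proximality; rather it analyses the dynamics of $\rho(\gamma_p)$ on $\F$ and shows that any $\rho(\gamma_p)$-fixed isotropic line lies in the ``nonnegative'' part of the fixed locus, from which transversality along edges and maximality on triangles follow. Your concluding step (``any edge of $\tilde\T$ can be completed to a positively oriented triangle \dots\ whose image under $\phi$ is a maximal triple'') only works once you know $F'=\phi|_{\tilde P}$, which you have not established; without uniqueness you need a separate argument that an arbitrary framing still interacts correctly with $\phi$ on nearby boundary points.
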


\subsection{Framed trivial local systems on graphs}
Let $\Gamma$ be an oriented graph without multiple edges and two-cycles. We denote by $V_\Gamma$ the set of vertices of $\Gamma$ and by $E^+_\Gamma$ the set of oriented edges. If $e\in E^+_\Gamma$ is an edge going from the vertex $v\in V_\Gamma$ to the vertex $v'\in V_\Gamma$, we write $e=(v\to v')=(v'\gets v)$. We also denote $E^-_\Gamma\coloneqq \{(v'\gets v)\mid (v\gets v')\in E^+_\Gamma\}$. Since $\Gamma$ has no two-cycles, $E^+_\Gamma\cap E^-_\Gamma=\emptyset$. We denote $E_\Gamma\coloneqq E^+_\Gamma\cup E^-_\Gamma$.

\begin{df}
A \defin{transverse framing} of $\Gamma$ is a pair of maps $F^t,F^b\colon V_\Gamma\to \F$ such that $F^t(v)$ and $F^b(v)$ are transverse for every $v\in V_\Gamma$.
\end{df}

\begin{df}\label{df:triv_loc_sys} A \defin{trivial $G$-local system on $\Gamma$} is a map $T\colon V_\Gamma\cup E_\Gamma\to G$ such that for any edge $(v'\gets v)\in E_\Gamma$, $v,v'\in V_\Gamma$, $T(v'\gets v)=T(v\gets v')^{-1}$ and  $T(v')=T(v'\gets v)T(v)$. A trivial $G$-local system on $\Gamma$ defined by the map $T$ is denoted by $(\Gamma,T)$. 
\end{df}

\begin{rem}
From Definition~\ref{df:triv_loc_sys} follows that $T(e_k)...T(e_1)=1$ for every cycle $(e_1,\dots e_k)$ where all $e_i\in E_\Gamma$, i.e. holonomies around cycles are trivial. This is why we call such local system trivial.

Let $v\in V_\Gamma$ be a fixed vertex of $\Gamma$. The map $T$ is determined by $T|_{E^+_\Gamma}$ and $T(v)$. The elements $T(v)$ and $T(e)$ for all $e\in E^+_{\Gamma}$ can be arbitrary elements of $G$ satisfying the following condition: $T(e_k)...T(e_1)=1$ for every cycle $(e_1,\dots e_k)$ where all $e_i\in E_\Gamma$.
\end{rem}

Let $(F^t,F^b)$ be a transverse framing, let $(\Gamma,T)$ be a trivial $G$-local system on $\Gamma$. We say that the transverse framing $(F^t,F^b)$ is \defin{adapted} to the trivial local system $(\Gamma,T)$ if for every $v\in V_\Gamma$ holds: $T(v)F^t(v)=\ell^+$ and $T(v)F^b(v)=\ell^-$. A quadruple $\mathfrak L:=(\Gamma,T,F^t,F^b)$ is called a \defin{transverse framed trivial $G$-local system} if $(F^t,F^b)$ is a transverse framing adapted to $(\Gamma,T)$. 

Since $\Gamma$ has no multiple edges, any path $\gamma$ in $\Gamma$ can be written in a unique way as a sequence of vertices $\gamma=(v_k\gets v_{k-1}\gets\dots\gets v_1)$ where $v_1$ is the starting vertex of $\gamma$ and $v_k$ is the end-vertex of $\gamma$ and $(v_{i+1}\gets v_{i})\in E_\Gamma$ for all $i\in\{1,\dots,k-1\}$. 
We also extend the map $T$ to the space of paths as follows: if $\gamma=(v_k\gets v_{k-1}\gets\dots\gets v_1)$, then $T(\gamma)\coloneqq T(v_k\gets v_{k-1})\dots T(v_2\gets v_1)$.

\begin{rem}
Let $\gamma$ be a path in $\Gamma$ connecting two vertices $v$ and $w$. Then in fact, $T(\gamma)$ depend only on $v$ and $w$ and not on the entire path $\gamma$. So sometimes to emphasise this fact, we will just write $T(w\gets v)$ instead of $T(\gamma)$.
\end{rem}

\subsection{From local systems to representations}\label{Loc_to_Rep}

Let $\mathfrak L=(\Gamma,T,F^t,F^b)$ be a transverse framed trivial $G$-local system on $\Gamma$. Let $H$ be a subgroup of the group $\Aut(\Gamma)$ of all automorphisms of $\Gamma$.

\begin{df}
The local system $\mathfrak L=(\Gamma,T,F^t,F^b)$ is called \defin{$H$-invariant} if for all $\gamma\in H$ and for all edges $(w\gets v)\in E_\Gamma$, $v,w\in V_\Gamma$, $T(w\gets v)=T(\gamma w\gets \gamma v)$.
\end{df}

An $H$-invariant $G$-local system always gives rise to a homomorphism $\rho\colon H\to G$ in the following way: let $v\in V_\Gamma$, we define for every $\gamma \in H$:
\begin{equation}\label{Graph_representation}
\rho_\mathfrak L(\gamma)\coloneqq T(\gamma v)^{-1}T(v).
\end{equation}

\begin{rem}
If $\mathfrak L=(\Gamma,T,F^t,F^b)$ is an $H$-invariant local system, then the framing $(F^t,F^b)$ is $\rho_\mathfrak L$-equivariant, i.e. for all $v\in V_\Gamma$, $F_t(\gamma v)=\rho_\mathfrak L(\gamma)(F_t(v))$ and $F_b(\gamma v)=\rho_\mathfrak L(\gamma)(F_b(v))$.
\end{rem}

\begin{prop}
The map $\rho_\mathfrak L$ is a group homomorphism that does not depend on the choice of $v\in V_\Gamma$.
\end{prop}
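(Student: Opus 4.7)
My plan is to prove the two assertions in the natural order: first independence of the base vertex, then the multiplicativity, which will follow cleanly from independence.

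First, I would show that for any $v, w \in V_\Gamma$ and any $\gamma \in H$,
\[
T(\gamma v)^{-1} T(v) = T(\gamma w)^{-1} T(w).
\]
Since $\Gamma$ is the graph underlying the local system and the local system is defined on all of $V_\Gamma \cup E_\Gamma$ with the cocycle relation from Definition~\ref{df:triv_loc_sys}, for any path $\alpha$ from $v$ to $w$ in $\Gamma$ we have $T(w) = T(\alpha) T(v)$, hence $T(\alpha) = T(w) T(v)^{-1}$. Applying $\gamma$ to this path gives a path $\gamma \alpha$ from $\gamma v$ to $\gamma w$, and the $H$-invariance assumption yields $T(\gamma \alpha) = T(\alpha)$, i.e. $T(\gamma w) T(\gamma v)^{-1} = T(w) T(v)^{-1}$. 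Rearranging this identity by multiplying on the left by $T(\gamma w)^{-1}$ and on the right by $T(v)$ produces $T(\gamma w)^{-1} T(w) = T(\gamma v)^{-1} T(v)$, which is the desired independence. (One has to check in passing that such a path from $v$ to $w$ exists; I would just assume $\Gamma$ is connected, which is the situation of interest here where $\Gamma$ comes from a connected surface.)

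Next I would derive the homomorphism property. Fix any $v \in V_\Gamma$ and let $\gamma_1, \gamma_2 \in H$. Using the already-proved independence, I may compute $\rho_{\mathfrak L}(\gamma_1)$ using the vertex $\gamma_2 v$ in place of $v$:
\[
\rho_{\mathfrak L}(\gamma_1) = T(\gamma_1 \gamma_2 v)^{-1} T(\gamma_2 v).
\]
Multiplying by $\rho_{\mathfrak L}(\gamma_2) = T(\gamma_2 v)^{-1} T(v)$ on the right gives
\[
\rho_{\mathfrak L}(\gamma_1)\rho_{\mathfrak L}(\gamma_2) = T(\gamma_1 \gamma_2 v)^{-1} T(\gamma_2 v) T(\gamma_2 v)^{-1} T(v) = T(\gamma_1 \gamma_2 v)^{-1} T(v) = \rho_{\mathfrak L}(\gamma_1 \gamma_2).
\]
Finally, $\rho_{\mathfrak L}(1) = T(v)^{-1} T(v) = 1$, so $\rho_{\mathfrak L}$ is indeed a group homomorphism.

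I expect no serious obstacle; the only substantive step is the translation of the vertex-wise $H$-invariance from Definition~4.5's formulation (invariance on single edges) to invariance along a path, which is a one-line induction on the length of the path using the cocycle relation in Definition~\ref{df:triv_loc_sys}. Everything else is bookkeeping. If one wishes to be pedantic about the case where $v$ and $w$ lie in different connected components of $\Gamma$, one can either restrict to the component containing the chosen basepoint (which is what is used in the applications to $\pi_1(S)$) or remark that the $H$-action permutes components in a way compatible with the argument above.
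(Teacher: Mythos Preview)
Your proof is correct and follows essentially the same approach as the paper: first establish independence of the base vertex by comparing $T(\gamma w)T(\gamma v)^{-1}$ with $T(w)T(v)^{-1}$ via $H$-invariance along a path, then use that independence (evaluating at a translated vertex) to collapse the product $\rho_{\mathfrak L}(\gamma_1)\rho_{\mathfrak L}(\gamma_2)$. The paper's write-up uses the shorthand $T(w\gets v)$ in place of your $T(\alpha)$ and computes $\rho_{\mathfrak L}(\gamma_2)\rho_{\mathfrak L}(\gamma_1)$ rather than $\rho_{\mathfrak L}(\gamma_1)\rho_{\mathfrak L}(\gamma_2)$, but up to this relabeling the arguments are identical.
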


\begin{proof}
Let $w\in V_\Gamma$ be another vertex. Then
$$T(\gamma w)^{-1}T(w)
=(T(\gamma w\gets \gamma v)T(\gamma v))^{-1}(T(w\gets v)T(v))$$
$$=T(\gamma v)^{-1}T(\gamma w\gets \gamma v)^{-1}T(w\gets v)T(v)
=T(\gamma v)^{-1}T(v)$$
since $T(\gamma w\gets \gamma v)=T(w\gets v)$ by $H$-invariance. So $\rho_\mathfrak L$ does not depend on the choice of $v\in V_\Gamma$.

Let now $\gamma_1,\gamma_2\in H$. We obtain $\rho_\mathfrak L(\gamma_2)=T(\gamma_2 (\gamma_1 v))^{-1}T(\gamma_1 v)$ and $\rho_\mathfrak L(\gamma_1)=T(\gamma_1 v)^{-1}T(v)$. Therefore,
$$\rho_\mathfrak L(\gamma_2)\rho_\mathfrak L(\gamma_1)=T(\gamma_2 (\gamma_1 v))^{-1}T(\gamma_1 v)T(\gamma_1 v)^{-1}T(v)=T(\gamma_2 (\gamma_1 v))^{-1}T(v)=\rho_\mathfrak L(\gamma_2\gamma_1).$$
So $\rho_\mathfrak L$ is a group homomorphism.
\end{proof}

Let $S$ be a punctured surface as before. Let $\mathcal T$ be an ideal triangulation of $S$ and $\tilde\Gamma$ be a graph on $\tilde S$ defined in Section~\ref{sec:Graph_Gamma}. Every framing $F\colon \tilde P\to\F$ defines the unique framing $(F^t,F^b)$ on $\tilde\Gamma$ as follows: for every vertex $v$ of $\tilde\Gamma$ define $F^t(v):=F(v^t)$ and $F^b(v):=F(v^b)$ (see Figure~\ref{vt-vb}). In this case we say that $(F^t,F^b)$ is \defin{induced} by $F$. We also denote $F^r(v):=F(v^r)$ and $F^l(v)=F(v^l)$.

Let $r$ be an internal edge of $\tilde\Gamma$ that separates two triangles $\tau$ and $\tau'$, and $v:=(\tau,r)$, $v':=(\tau',r)$. Then $F^t(v')=F^b(v)$, $F^b(v')=F^t(v)$, $F^r(v')=F^l(v)$, and $F^l(v')=F^r(v)$. Further, let $v$ and $v'$ two vertices of $\tilde\Gamma$ that lie in the same triangle and there is the edge $(v\to v')$ in $\tilde\Gamma$. Then $F^t(v')=F^b(v)$, $F^b(v')=F^r(v)$, $F^r(v')=F^t(v)$.

The following proposition is immediate:

\begin{prop}
Let $(F^t,F^b)$ be a transverse framing on $\tilde\Gamma$ such that
\begin{itemize}
    \item for every internal edge $r$ of $\tilde\Gamma$ that separates two triangles $\tau$ and $\tau'$, and $v:=(\tau,r)$, $v':=(\tau',r)$ holds $F^t(v')=F^b(v)$;
    \item for every two vertices $v$ and $v'$ of $\tilde\Gamma$ that lie in the same triangle and there is the edge $(v\to v')$ in $\tilde\Gamma$ holds $F^t(v')=F^b(v)$.
\end{itemize}
Then there exist a unique framing $F\colon \tilde P\to\F$ that induces $(F^t,F^b)$.
\end{prop}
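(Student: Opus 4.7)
The strategy is to define $F$ puncture-by-puncture from $F^t$, verify the induction relations, and deduce uniqueness. For each $\tilde p\in\tilde P$ pick any $v_0\in V_{\tilde\Gamma}$ with $v_0^t=\tilde p$ (such $v_0$ exists since $\tilde p$ is a vertex of some triangle of $\tilde{\mathcal T}$, and on each edge of that triangle incident to $\tilde p$ there is exactly one vertex of $\tilde\Gamma$, one of which is labelled so that $\tilde p$ plays the role of ``top''). Set $F(\tilde p):=F^t(v_0)$.

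The core of the proof is to show this is independent of $v_0$, i.e.\ $F^t(v)=F^t(w)$ whenever $v^t=w^t=\tilde p$. Arrange the triangles of $\tilde{\mathcal T}$ incident to $\tilde p$ into a fan $\tau_1,\tau_2,\dots$, with $\tau_i$ and $\tau_{i+1}$ sharing an edge $r_i$ through $\tilde p$. Let $u_i$ be the vertex of $\tilde\Gamma$ in $\tau_i$ lying on $r_{i-1}$ with $u_i^t=\tilde p$, and $w_i$ the vertex of $\tilde\Gamma$ in $\tau_i$ lying on $r_i$ with $w_i^b=\tilde p$; any $v$ with $v^t=\tilde p$ equals some $u_i$. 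The three vertices of $\tilde\Gamma$ inside $\tau_i$ form an oriented $3$-cycle $u_i\to z_i\to w_i\to u_i$. Applying the within-triangle hypothesis to the edge $w_i\to u_i$ gives $F^t(u_i)=F^b(w_i)$. The across-edge hypothesis, read symmetrically for the pair $(w_i,u_{i+1})$ on the shared edge $r_i$, gives $F^t(u_{i+1})=F^b(w_i)$. Combining, $F^t(u_{i+1})=F^t(u_i)$; iterating along the fan shows $F^t(u_i)$ is constant in $i$, so $F(\tilde p)$ is well defined.

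It then remains to check that $F$ induces $(F^t,F^b)$ and is unique. By construction $F(v^t)=F^t(v)$. For the bottom component, given $v\in V_{\tilde\Gamma}$, let $w$ be the unique vertex in the same triangle as $v$ with $v\to w$ an edge of $\tilde\Gamma$; by the construction of $\tilde\Gamma$ one has $w^t=v^b$, and the within-triangle hypothesis yields $F^t(w)=F^b(v)$, hence $F(v^b)=F^t(w)=F^b(v)$. Uniqueness is immediate: any framing inducing $(F^t,F^b)$ must send $\tilde p$ to $F^t(v_0)$ for any $v_0$ with $v_0^t=\tilde p$.

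The main obstacle is the well-definedness step, which requires the two hypotheses to interlock precisely along the fan around $\tilde p$. Two details need care: first, the across-edge hypothesis is to be read as symmetric in the labels $\tau,\tau'$, so that one obtains both $F^t(v')=F^b(v)$ and $F^b(v')=F^t(v)$ on each shared edge; second, the identification of $u_i$ versus $w_i$ inside $\tau_i$ relies on the orientation convention from Section~\ref{sec:Graph_Gamma} defining $v^t$ and $v^b$, and one must track this convention carefully when passing from $\tau_i$ to $\tau_{i+1}$ through the shared edge.
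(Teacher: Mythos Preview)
Your proof is correct, and it is essentially the only reasonable way to argue: define $F(\tilde p)$ via any $v$ with $v^t=\tilde p$, propagate along the fan of triangles at $\tilde p$ using the two hypotheses to show independence of the choice, and verify the induction relations. The paper itself gives no proof at all---it declares the proposition ``immediate''---so your write-up is more detailed than what appears there.

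One minor comment: you flag at the end that the assignment of $u_i$ to $r_{i-1}$ and $w_i$ to $r_i$ (rather than the other way round) depends on the orientation convention and the direction in which you traverse the fan. This is true, but it is worth noting that the argument is entirely symmetric in this choice: if the roles are swapped, the within-triangle edge becomes $u_i\to w_i$ instead of $w_i\to u_i$, and the chain of equalities runs in the opposite direction along the fan, with the same conclusion. So the ``obstacle'' you mention is not really an obstacle, and you could safely drop the hedging in a final version.
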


\begin{rem} Let $(\rho, F)$ be a framed homomorphism. If $(F^t,F^b)$ is the framing induced by $F$, then $(F^t,F^b)$ is $\rho$-equivariant as well.
If $\mathfrak L=(\tilde\Gamma,T,F^t,F^b)$ is a $\pi_1(S)$-invariant transverse framed trivial $G$-local system on $\tilde\Gamma$, then $F$ is a framing of $\rho_\mathfrak L$, but in general, $\rho\neq \rho_\mathfrak L$.
\end{rem}

\begin{df}
We say that a transverse framed trivial $G$-local system $(\tilde\Gamma,T,F^t,F^b)$ is \defin{maximal} if the framing $(F^t,F^b)$ is induced by a maximal framing $F\colon \tilde P\to\F$.
\end{df}

\subsection{From representations to local systems}\label{Moves}

In this section, we show how to construct a maximal framed trivial $G$-local system on $\Gamma$ out of a maximal framed representation. 
Let $\mathcal T$ be an ideal triangulation of $S$ and $\tilde\Gamma$ be a graph on $\tilde S$ defined in Section~\ref{sec:Graph_Gamma}. Given a maximal framed homomorphism $(\rho,F)\in\Hom^{\fr}_+(\pi_1(S,b),G)$, then the framing $F$ gives rise to the $\rho$-equivariant transverse framing $(F^t,F^b)$ of $\tilde\Gamma$. We are going to define a local system $\mathfrak L=(\tilde\Gamma,T)$ such that $(F^t,F^b)$ will be its transverse framing and  $\rho_\mathfrak L=\rho$.



We will start with the two simplest cases when $S$ is a triangle and a triangulated quadrilateral, and then we describe a procedure for a general surface $S$.

\subsubsection{Triangle}\label{Turn}

Let $S=\tilde S$ be an ideal triangle with vertices $p_1,p_2,p_3$. The only triangulation $\T$ of $S$ consists of all external edges of $S$. Let $(\rho,F)$ be a maximal framed homomorphism. Since $\pi_1(S)$ is trivial, $\rho$ is trivial. Let $F(p_i)=F_i$ for all $i\in\{1,2,3\}$. Then $(F_1,F_2,F_3)$ is a maximal triple. Let the graph $\Gamma=\tilde\Gamma$ consist of three vertices $v$, $v'$ and $v''$ and three edges $(v'\gets v)$, $(v''\gets v')$ and $(v\gets v'')$.

Because of maximality by Proposition~\ref{act_pos_triples}, there exists an element $g\in G$ such that $g(F_1,F_2,F_3)=(\ell^+,\ell^-,\ell^1)$
(see Figure~\ref{Turn_figure}).

\begin{figure}[ht]
\begin{tikzpicture}
\draw (-3,3) node[label = ${g(F_1)=\ell^+}$] (v1) {} -- (-3,-2) node[label = below:${g(F_2)=\ell^-}$] (v2) {} -- (1,0.5) node[label = right:${g(F_3)=\ell_1}$] (v3) {} -- (-3,3);
\node[fill, circle, inner sep = 1.5pt, label=$v$](v) at (-2.5,0.5) {};
\node[fill, circle, inner sep = 1.5pt, label=$v''$](v'') at (-1.5,1.4) {};
\node[fill, circle, inner sep = 1.5pt, label= below:$v'$](v') at (-1.5,-0.3) {};

\draw[middlearrow={latex}] (v) to (v') ;
\draw[middlearrow={latex}] (v') to (v'');
\draw[middlearrow={latex}] (v'') to (v);

\end{tikzpicture}
\caption{Graph $\Gamma$ in a triangle}\label{Turn_figure}
\end{figure}
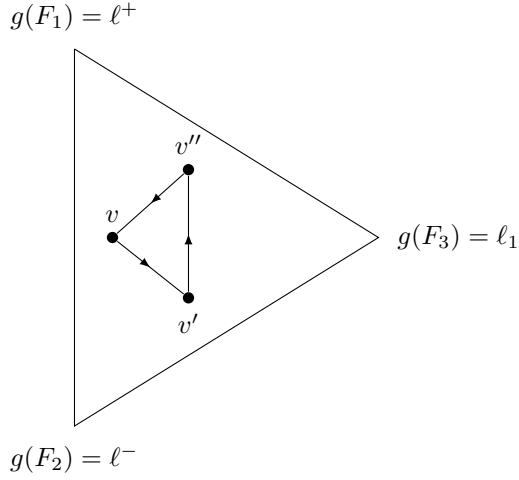

We define $T(v)\coloneqq g$ and $T(v'\gets v)\coloneqq T(v''\gets v')\coloneqq T(v\gets v'')\coloneqq \begin{pmatrix} -1 & 1 \\ -1 & 0 \end{pmatrix}$, then $T(v')=T(v'\gets v)T(v)$, $T(v'')=T(v''\gets v')T(v')$ and
$$T(v')F^t(v')=T(v')F_2=T(v'\gets v)\ell^-=\ell^+,$$
$$T(v')F^b(v')=T(v')F_3=T(v'\gets v)\ell^1=\ell^-,$$
$$T(v')F^r(v')=T(v')F_1=T(v'\gets v)\ell^+=\ell^1.$$ Similarly, one see that $T(v'')F^t(v'')=\ell^+$, $T(v'')F^b(v'')=\ell^-$ and $T(v'')F^r(v'')=\ell^1$. Moreover, $T(v\gets v'')T(v''\gets v')T(v'\gets v)=\Id$. So we have constructed a well defined maximal framed trivial $G$-local system in $\Gamma$. Notice, that local system is not unique, it depends on the choice of $g\in G$. This $g$ is uniquely defined up to the left multiplication by an element of $\K$ (see Proposition~\ref{act_pos_triples}).

\subsubsection{Quadrilateral. Crossing an edge of triangulation}\label{Quadruple}

Let $S=\tilde S$ be an ideal quadrilateral with cyclically ordered vertices $p_1,p_3,p_2,p_4$. Let $\T$ be the triangulation that consists of all external edges of $S$ and one internal edge connecting $p_1$ and $p_2$. Let $(\rho,F)$ be a maximal framed homomorphism. Since $\pi_1(S)$ is trivial, $\rho$ is trivial. Let $F(p_i)=F_i$ for all $i\in\{1,\dots,4\}$. Then $(F_1,F_4,F_2,F_3)$ is a positive quadruple. The graph $\Gamma=\tilde\Gamma$ consist of six vertices, four of them lie close to external edges and two $v$ and $v'$ lie close to the internal edge. So there exist an edge $(v'\gets v)$ or $(v\gets v')$. Without loss of generality, we assume the first case.

\begin{figure}[ht]
\scalebox{1}{
\begin{tikzpicture}

\draw (-3,3) node[label = \small${T(v)F_1=\ell^+}$] (v1) {} -- (-3,-2) node[label = below:\small${T(v)F_2=\ell^-}$] (v2) {} -- (0,0.5) node[label = left:\small${T(v)F_3=\ell^1}$] (v3) {} -- (-3,3);
\draw (-3,-2) -- (-6,0.5) node[label = right:\small{${T(v)F_4=\ell(a)}$}] (v4) {} -- (-3,3);

\draw (-3+7,3) node[label = \small${T(v')F_2=\ell^+}$] (v1) {} -- (-3+7,-2) node[label = below:\small${T(v')F_1=\ell^-}$] (v2) {} -- (0+7,0.5) node[label = left:\small${T(v')F_4=\ell^1}$] (v3) {} -- (-3+7,3);
\draw (-3+7,-2) -- (-6+7,0.5) node[label = right:\small${T(v')F_3=\ell(a)}$] (v4) {} -- (-3+7,3);

\node[fill, circle, inner sep = 1.5pt, label= $v$](a) at (-2.5,0.5) {};
\node[fill, circle, inner sep = 1.5pt, label= $v'$](b) at (-3.5,0.5) {};
\draw[middlearrow={latex}] (a) to (b);

\node[fill, circle, inner sep = 1.5pt, label= $v'$](b) at (-2.5+7,0.5) {};
\node[fill, circle, inner sep = 1.5pt, label= $v$](a) at (-3.5+7,0.5) {};
\draw[middlearrow={latex}] (a) to (b);

\draw[middlearrow={latex}] (-.25,1.5) to[bend left] (1.5,1.5) ;
\draw[middlearrow={latex}] (1.5,-0.5) to[bend left] (-.25,-0.5) ;

\node[label = $\begin{pmatrix}0 & \sqrt{a^{-1}} \\ -\sqrt{a} & 0\end{pmatrix}$] at (0.65,1.7) {};
\node[label = $\begin{pmatrix}0 & -\sqrt{a^{-1}} \\ \sqrt{a} & 0\end{pmatrix}$] at (0.65,-2) {};

\end{tikzpicture}}
\caption{Crossing an edge of triangulation}\label{Quadr_figure}
\end{figure}
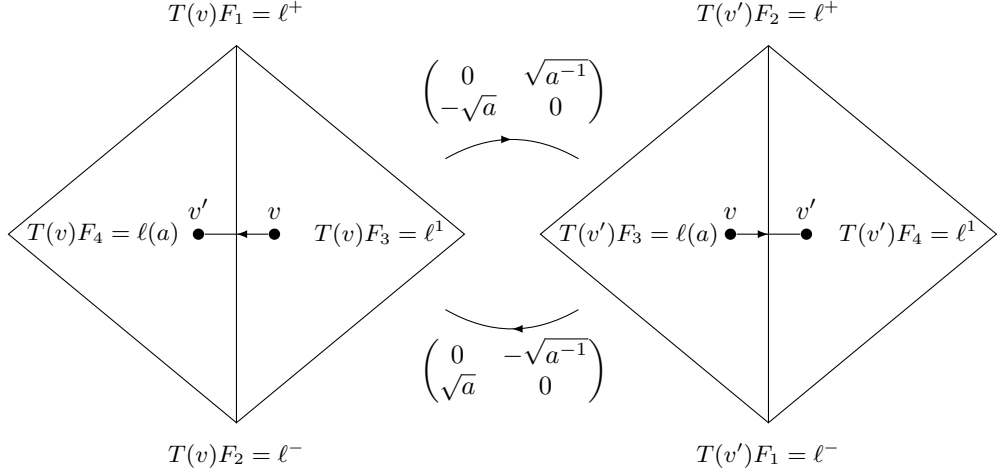

Because of transversality by Proposition~\ref{act_pos_quadr}, there exists an element $g\in G$ such that
$$g(F_1,F_4,F_2,F_3)=(\ell^+,\ell(a),\ell^-,\ell^1)$$
where $\ell(a)=(1,-a)^TA$ with $a\in A^\sigma_+$ (see Figure~\ref{Quadr_figure}). For every element of $a\in A^\sigma_+$ there exist the unique element $q\in A^\sigma_+$ such that $q^2=a$. The element $q$ is called the \defin{square root} of $a$ and denoted as $q=\sqrt{a}$. For a proof of the existence of the sqare root we refer to~\cite[Corollary~2.42]{ABRRW}. We define $T(v)\coloneqq g$, $T(v'\gets v)\coloneqq\begin{pmatrix}0 & \sqrt{a^{-1}} \\ -\sqrt{a} & 0\end{pmatrix}$, then $T(v')=T(v'\gets v)T(v)$. All other edges of $\Gamma$ lie inside one of two triangles of $S$, so we construct the map $T$ for them as in the previous subsection. So we have constructed a maximal framed trivial $G$-local system on $\Gamma$. The element $g$ is unique up to the left multiplication by an element of $\K$.


Let now $S$ be any punctured surface, $\mathcal T$ be an ideal triangulation of $S$ and $(\rho,F)\in\Hom^{\fr}_+(\pi_1(S),G)$. Let $\tilde\Gamma$ be the graph as in Section~\ref{sec:Graph_Gamma} with the framing $(F^t,F^b)$ induced by $F$. We can assume that the base point of the fundamental group agrees with some of point $b\in V_\Gamma$ that lies in a triangle $\tau$. We take $\tilde b\in V_{\tilde\Gamma}$ a lift of $b$.

Starting at $\tilde b$ and applying inductively the procedure described in Section~\ref{Turn} for every triangle and in Section~\ref{Quadruple} for every pair of adjacent triangles, we obtain a maximal framed local system on $\tilde\Gamma$. This local system have the property that the representation described in~(\ref{Graph_representation}) agrees with $\rho$. However, this local system is in general not $\pi_1(S)$-invariant, although the framing is $\rho$-equivariant. In the proof of Theorem~\ref{main_thm} in Section~\ref{section:parametrization}, we will modify this local system to make it $\pi_1(S)$-invariant.

\section{Parametrization of the space of maximal representations}\label{section:parametrization}

Now we are ready to prove one of the main results of this paper. As before, we assume $(A,\sigma)$ to be a Hermitian algebra. We denote $G:=\Sp_2(A,\sigma)$, and $\F:=\PP(\Is(\omega))$. Further, $S$ is a punctured surface with Euler characteristic $\chi(S)$ and with $p_e$ external punctures. The next theorem gives a parametrization of the space of maximal framed representations.

\begin{teo}\label{main_thm}
The space $\Rep_+^{\fr}(\pi_1(S),G)$ is homeomorphic to the following space:
$$((A^\sigma_+)^{p_e-3\chi(S)}\times U_{(A,\sigma)}^{1-\chi(S)})/U_{(A,\sigma)}$$
where $U_{(A,\sigma)}$ acts by conjugation in every factor.
\end{teo}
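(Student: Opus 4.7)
Fix an ideal triangulation $\mathcal T$ of $S$, a base vertex $b \in V_\Gamma$ in some triangle $\tau_0$, and a lift $\tilde b \in V_{\tilde\Gamma}$; let $\Gamma,\tilde\Gamma$ be the graphs of Section~\ref{sec:Graph_Gamma}. The plan is to construct an explicit homeomorphism through the machinery of transverse framed trivial local systems developed in Section~\ref{sec:framed_reps}. The edge count follows from the incidence relation $3 \#T = 2 \#E_{in} + \#E_{ex}$ together with $\#T = p_e - 2\chi(S)$ and the fact that $\#E_{ex} = p_e$ (each boundary component splits into one external edge per external puncture), giving $\#E_{in} = p_e - 3\chi(S)$; the graph $\Gamma$ deformation retracts onto a spine of $S$, so $\pi_1(\Gamma) \cong \pi_1(S)$ is free of rank $1 - \chi(S)$.

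Given $[(\rho, F)] \in \Rep_+^{\fr}(\pi_1(S), G)$, I would first extract the edge coordinates: for every internal edge $r$ of $\mathcal T$ bounding two triangles, maximality of $F$ makes the four framing lines a positive quadruple, so Proposition~\ref{act_pos_quadr} produces $a_r \in A^\sigma_+$, well-defined up to conjugation by $U_{(A,\sigma)}$, and $\pi_1(S)$-equivariance of $F$ makes $r \mapsto a_r$ descend to $S$, yielding a point in $(A^\sigma_+)^{p_e-3\chi(S)}$. Second, I would choose $g \in G$ with $g(F(\tilde b^t), F(\tilde b^b), F(\tilde b^r)) = (\ell^+, \ell^-, \ell^1)$; by Proposition~\ref{act_pos_triples} this $g$ is unique up to left multiplication by $\K \cong U_{(A,\sigma)}$, which accounts for the overall $U_{(A,\sigma)}$-quotient. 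Setting $T(\tilde b) := g$ and propagating along $\tilde\Gamma$ using the turn matrix $\bigl(\begin{smallmatrix} -1 & 1 \\ -1 & 0 \end{smallmatrix}\bigr)$ inside each triangle (Section~\ref{Turn}) and the square-root matrix $\bigl(\begin{smallmatrix} 0 & \sqrt{a_r^{-1}} \\ -\sqrt{a_r} & 0 \end{smallmatrix}\bigr)$ across each internal edge (Section~\ref{Quadruple}) produces a maximal framed trivial $G$-local system on $\tilde\Gamma$ whose associated representation via~(\ref{Graph_representation}) is $\rho$.

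Third, I would extract the monodromy coordinates. Choose a spanning tree of $\Gamma$; its complement has exactly $1 - \chi(S)$ edges, one per free generator $\gamma_i$ of $\pi_1(S)$. Along the tree the propagated local system descends to $\Gamma$ and is completely determined by the edge invariants $(a_r)$; the failure of $\pi_1$-invariance along each complementary edge is measured by comparing $T$ computed from either endpoint. Because both candidates normalise the same triple $(\ell^+,\ell^-,\ell^1)$ at the target vertex, their ratio lies in $\K \cong U_{(A,\sigma)}$, giving the element $u_i \in U_{(A,\sigma)}$ associated with $\gamma_i$; concretely, $u_i$ is the monodromy of the normalised local system around $\gamma_i$. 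These assemble into the $U_{(A,\sigma)}^{1-\chi(S)}$-factor, and the residual left action by $\K$ at $\tilde b$ acts simultaneously by conjugation on every $u_i$ and on every $a_r$, producing the announced quotient by $U_{(A,\sigma)}$.

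The inverse map reverses this recipe: given edge and monodromy data, propagate along $\tilde\Gamma$ using the same formulas, twisted on the complementary edges by the prescribed $u_i$, to obtain a $\pi_1(S)$-invariant maximal framed local system on $\tilde\Gamma$, then read off $(\rho, F)$. That both constructions are continuous follows from continuity of the square-root map $A^\sigma_+ \to A^\sigma_+$ (\cite[Corollary~2.42]{ABRRW}) and from the explicit formulas; bijectivity is built into the construction, and Remark~\ref{max.Hausdorff} together with compactness of $U_{(A,\sigma)}$ upgrades this continuous bijection to a homeomorphism. \emph{The main obstacle} I expect is the bookkeeping in the third step: verifying that the ``gauge discrepancies'' along a well-chosen set of $1-\chi(S)$ edges really lie in $\K$ and exhaust the remaining freedom, with no hidden relation reducing the dimension. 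Handling this cleanly will likely require choosing a fundamental domain for $\pi_1(S)\curvearrowright \tilde S$ consisting of triangles of $\tilde{\mathcal T}$ and checking the cocycle condition precisely on its boundary identifications; this reduces to an inductive triangle-by-triangle argument using the transitivity statements in Propositions~\ref{act_pos_triples} and~\ref{act_pos_quadr}.
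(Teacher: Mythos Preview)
Your overall architecture---extract one $A^\sigma_+$-invariant per internal edge via Proposition~\ref{act_pos_quadr}, normalize a base triple by Proposition~\ref{act_pos_triples}, propagate with the standardized turn and square-root matrices, then record the residual unitary data---matches the paper's. Your closing paragraph, where you anticipate needing a fundamental polygon $S_0\subset\tilde S$ and a careful check on its boundary identifications, is exactly how the paper proceeds in Step~2.

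Two points need repair. First, the claim that $\Gamma$ is a spine of $S$ with $\pi_1(\Gamma)\cong\pi_1(S)$ is false: the three turn-edges in every triangle form a $3$-cycle, so $\chi(\Gamma)=-\#E_{in}\neq\chi(S)$ and a spanning tree of $\Gamma$ has $\#E_{in}+1$ complementary edges, not $1-\chi(S)$. What rescues the count is that the turn matrix cubes to the identity, so the $\#T$ triangle loops have trivial holonomy; only after killing those do you see $\pi_1(S)$. You need to say this. Second, the sentence ``whose associated representation via~(\ref{Graph_representation}) is $\rho$'' is not right: the fully propagated local system is \emph{not} $\pi_1(S)$-invariant (the edge invariant $a_{\tilde r}$ at a lift $\tilde r$ depends on the path from $\tilde b$, so $a_{\gamma\tilde r}$ is only conjugate to $a_{\tilde r}$), and~(\ref{Graph_representation}) does not even define a homomorphism in that case. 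This is precisely why a fundamental domain is needed.

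There is also a genuine difference in how the unitary coordinates arise. You aim to normalize the \emph{triple} $(\ell^+,\ell^-,\ell^1)$ at every vertex, so that any discrepancy lands in $\K\cong U_{(A,\sigma)}$. The paper instead defines $T$ on $S_0$ by propagation, extends $\pi_1$-invariantly by $T(\tilde v):=T(\gamma\tilde v)\rho(\gamma)$, and then computes that for a boundary edge $(\tilde w\gets\tilde v)$ of $S_0$ the crossing map satisfies $T(\tilde w\gets\tilde v)(\ell^+,\ell^-)=(\ell^-,\ell^+)$, hence has the form $l_e\omega$ with $l_e\in\L\cong A^\times$---the stabilizer of a \emph{pair}, not a triple. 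The paper then invokes the polar decomposition $l_e=\diag(u_eb_e,u_eb_e^{-1})$ with $u_e\in U_{(A,\sigma)}$ and $b_e\in A^\sigma_+$; the $b_e$'s for the $1-\chi(S)$ boundary pairs, together with the cross-ratios for the edges interior to $S_0$, make up the full $(A^\sigma_+)^{p_e-3\chi(S)}$, and the $u_e$'s give $U_{(A,\sigma)}^{1-\chi(S)}$. Your $\K$-based variant can be made to work too, but then the $A^\sigma_+$-coordinate on a boundary edge is the cross-ratio of the quadrilateral there rather than the symmetric part of a polar decomposition---a different (though ultimately equivalent) set of coordinates, and one that requires you to fix lifts of those edges, i.e.\ again a fundamental domain.
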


\begin{proof} Let $(\rho,F)$ be a maximal framed homomorphism. We pick some ideal triangulation $\T$ of $S$ with the set of internal edges $E_{in}$. The framing $F$ induces a framing $(F^t,F^b)$ of $\tilde\Gamma$.

Let $b\in S$ be the base point of $\pi_,(S,b)$ and $\tilde b\in\tilde S$ be a lift of $b$. As before, we assume $\tilde b\in V_{\tilde\Gamma}$. Let $g\in G$ be such that $g(F^t(\tilde b),F^b(\tilde b),F^r(\tilde b))=(\ell^+,\ell^-,\ell^1)$. We define
$$X\coloneqq\{(\rho,F,g)\mid (\rho,F)\in \Hom^{\fr}_+(\pi_1(S,b),G),\; g\in G : g(F^t(\tilde b),F^b(\tilde b),F^r(\tilde b))=(\ell^+,\ell^-,\ell^1)\}$$
and the equivalence relation $\sim$ on $X$ as follows: $(\rho,F,g)\sim (\rho',F',g')$ if and only if $(g\rho g^{-1},gF)=(g'\rho' g'^{-1},g'F')$.

Step 1: Assume $S=\tilde S$ is a triangulated polygon and $\Gamma$ defined as in Section~\ref{sec:Graph_Gamma}. Let $(\rho,F,g)\in X$. We define $T(b)=g$ and then apply the procedure defined in Section~\ref{Moves}. We obtain in this way a framed transverse $G$-local system on $\Gamma$. Let $v=(\tau,r)\in V_\Gamma$ for $r\in E_{in}$, then $v$ lies in the triangle $\tau$ with vertices $v^t,v^b,v^r\in P$. Let $\tau'$ be the triangle adjacent to $\tau$ along the edge $r=(v^t,v^b)$, and $v^b$, $v^t$ and $v^l$ be the vertices of $\tau'$. Let $b_v\in A^\sigma_+$ be defined as follows $T(v)F(v^l)=\ell(b_v)=(1,-b_v)^TA$. The elements $b_v$ are uniquely defined for all $v\in V_\Gamma$. Moreover by construction in Section~\ref{Moves}, $b_v=b_{v'}$ where $v'=(\tau',r)$. Therefore, we can assign the element $b_v$ not to the vertex of $\Gamma$ but to the internal edge $r$ of $\mathcal T$. So we will write $b_r$ instead.

Therefore, we obtain the following map:
$$\begin{matrix}
\mathcal X\colon & X & \to & (A^\sigma_+)^{E_{in}}\\
 & (\rho,F,g) & \mapsto & (b_r)_{r\in E_{in}}.
\end{matrix}$$
Notice that $\mathcal X(\rho,F,g)=\mathcal X(\rho',F',g')$ if and only if  $(\rho,F,g)\sim(\rho',F',g')$. This map is surjective because any choice of $(b_e)_{e\in E_{in}}$ defines a transverse framed $G$-local system on $\Gamma$ that by Section~\ref{Loc_to_Rep} defines a framed homomorphism $(\rho,F)\in\Hom^{\fr}_\T(\pi_1(S),G)$.

We also have the $\K$-action on $X$ by the left multiplication in the last component and by componentwise conjugation on $(A^\sigma_+)^{E_{in}}$ that is compatible with $\mathcal X$, i.e. let $\diag(u,u)=k\in\K$ for $u\in U_{(A,\sigma)}$ then $\mathcal X(\rho,D,kg)=(ub_ru^{-1})_{r\in E_{in}}$.
We obtain the quotient map:
$$\bar{\mathcal X}\colon \Rep^{\fr}_+(\pi_1(S,b),G)\to (A^\sigma_+)^{E_{in}}/\K$$
which is a homeomorphism.

Step 2: Now let us turn to the general case. We choose a connected fundamental domain $S_0$ for $S$ in $\tilde S$ that completely consists of ideal triangles and $\tilde b$ lies in one of them. The fundamental domain $S_0$ is an ideal polygon, so we can apply the procedure above and obtain a transverse framed $G$-local system over $\Gamma_0$ which is a subgraph of $\tilde \Gamma$ contained in $S_0$. We want to extend this local system to the entire graph $\tilde\Gamma$ in a $\pi_1(S)$-invariant way.

To obtain $\pi_1(S)$-invariant local system, according to Section~\ref{Loc_to_Rep}, we need to define $T(\tilde v)$ for every $\tilde v\in V_\Gamma$ as follows: let $\gamma\in\pi_1(S)$ be the unique element such that $\gamma v\in S_0$. We define $T(\tilde v)\coloneqq T(\gamma \tilde v)\rho(\gamma)$. From this follows immediately that for every $\tilde v\in V_{\tilde\Gamma}$ and for every $\gamma\in\pi_1(S)$, $T(\gamma\tilde v)=T(\tilde v)\rho(\gamma)^{-1}$.

The local system $(\tilde\Gamma,T)$ is indeed $\pi_1(S)$-invariant because: let $(w\gets v)\in E_{\tilde\Gamma}$ and $\gamma\in\pi_1(S)$. Then $$T(\gamma\tilde w\gets\gamma\tilde v)=T(\gamma\tilde w)T(\gamma\tilde v)^{-1}=T(\tilde w)\rho(\gamma)^{-1}\rho(\gamma)T(\tilde v)^{-1}=T(\tilde w)T(\tilde v)^{-1}=T(\tilde w\gets\tilde v).$$

The framing $(F^t,F^b)$ of $(\tilde\Gamma,T)$ is adapted to $(\tilde\Gamma, T)$. Indeed, if $\tilde v$ is contained in $S_0$, then by construction $T(\tilde v)(F^t(\tilde v),F^b(\tilde v))=(\ell^+,\ell^-)$. If $\tilde v\in V_{\tilde\Gamma}$ any vertex, then there exist the unique $\gamma\in\pi_1(S)$ with $\gamma\tilde v$ is contained in $S_0$. Then
$$T(\tilde v)(F^t(\tilde v),F^b(\tilde v))=T(\gamma \tilde v)\rho(\gamma)\rho(\gamma)^{-1}(F^t(\gamma\tilde v),F^b(\gamma\tilde v))=T(\gamma \tilde v)(F^t(\gamma\tilde v),F^b(\gamma\tilde v))=(\ell^+,\ell^-).$$

The framed local system $(\tilde \Gamma,T, F^t,F^b)$ is determined by its restriction to $\Gamma_0$ and the maps $T(\tilde w\gets \tilde v)$ for $\tilde v\in S_0$ and $\tilde w\notin S_0$. So we need to understand the maps $T(\tilde w\gets \tilde v)$.

Let $e,e'$ be edges of the triangulation $\tilde\T$ that are in the boundary of $S_0$ but not for $\tilde S$ and let $\gamma\in\pi_1(S)$ be the unique element such that $\gamma(e)=e'$. Without loss of generality assume $(\tilde w\gets \tilde v)\in E^+_{\tilde\Gamma}$ that crosses $e$ and $\tilde v\in S_0$, $\tilde w\notin S_0$. Then the edge $(\gamma_i\tilde w\gets \gamma_i\tilde v)\in E^+_{\tilde\Gamma}$ crosses $e'$ and $\gamma_i\tilde v\notin S_0$, $\gamma_i\tilde w\in S_0$.
$$T(\tilde w\gets \tilde v)= T(\tilde w)T(\tilde v)^{-1}=T(\gamma \tilde w)\rho(\gamma)T(\tilde v)^{-1}.$$
Therefore:
$$T(\tilde w\gets \tilde v)(\ell^+,\ell^-)
=T(\gamma \tilde w)\rho(\gamma)T(\tilde v)^{-1}(\ell^+,\ell^-)
=T(\gamma \tilde w)\rho(\gamma)(F^t(\tilde v),F^b(\tilde v))$$
$$=T(\gamma \tilde w)(F^t(\gamma\tilde v),F^b(\gamma\tilde v))
=T(\gamma \tilde w)(F^b(\gamma\tilde w),F^t(\gamma\tilde w))
=(\ell^-,\ell^+).$$
This means that $T(\tilde w\gets \tilde v)=l_e\omega$ where $l_e\in\L$ is the uniquely defined element and $\omega=\Om$. So for every pair $\{e,e'\}$ of external edges that are related by a nontrivial element of $\pi_1(S)$ we get an element of $\L$. By the polar decomposition for Hermitian algebras~\cite[Theorem~2.66]{ABRRW}, the element $l_e$ can be written in the unique way as $l_e=\diag(u_eb_e,u_eb_e^{-1})$, where $u_e\in U_{(A,\sigma)}$, $b_e\in A^\sigma_+$. We denote the set of all $\{e,e'\}$ pairs as above by $E_0$. Its cardinality is $1-\chi(S)$.

As in Step 1, we obtain a map
$$\begin{matrix}
\mathcal X\colon & X & \to & (A^\sigma_+)^{E_{in}}\times U_{(A,\sigma)}^{E_0}\\
 & (\rho,F,g) & \mapsto & (b_r,u_e)_{r\in E_{in},\;e\in E_0}.
\end{matrix}$$
Notice that $\mathcal X(\rho,F,g)=\mathcal X(\rho',F',g')$ if and only if  $(\rho,F,g)\sim(\rho',F',g')$. Further, this map is surjective because any choice of $(b_r,u_e)_{r\in E_{in},\;e\in E_0}$ defines a $\pi_1(S)$-invariant transverse framed $G$-local system on $\tilde\Gamma$ that by~\ref{Loc_to_Rep} defines a framed homomorphism $(\rho,F)\in\Hom^{\fr}_+(\pi_1(S),G)$.

We also have the $\K$-action on $X$ by the left multiplication in the last component and by componentwise conjugation on $(A^\sigma_+)^{E_{in}}\times U_{(A,\sigma)}^{E_0}$ that is compatible with $\mathcal X$, i.e. $\mathcal X(\rho,D,kg)=(ub_ru^{-1},uu_eu^{-1})_{r\in E_{in},\;e\in E_0}$ where $k=\diag(u,u)$.
We obtain the quotient map:
$$\bar{\mathcal X}\colon \Rep^{\fr}_+(\pi_1(S,b),G)\to ((A^\sigma_+)^{E_{in}}\times U_{(A,\sigma)}^{E_0})/U_{(A,\sigma)}$$
which is a homeomorphism.
\end{proof}

\begin{cor}
The space $\Rep_+^{\fr}(\pi_1(S),G)$ is homotopy equivalent to $\K^{1-\chi(S)}/\K$. Let $k$ be the number of connected components of $\K$, then $\Rep_+^{\fr}(\pi_1(S),G)$ has $k^{1-\chi(S)}$ connected components.
\end{cor}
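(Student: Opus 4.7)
The plan is to deduce both claims directly from Theorem~\ref{main_thm}, which presents $\Rep_+^{\fr}(\pi_1(S),G)$ as the quotient $((A^\sigma_+)^{p_e-3\chi(S)}\times U_{(A,\sigma)}^{1-\chi(S)})/U_{(A,\sigma)}$, with $U_{(A,\sigma)}\cong\K$ (by Proposition~\ref{act_pos_triples}) acting by simultaneous conjugation on all factors. The geometric input I will exploit is that each $A^\sigma_+$ factor is not only contractible but \emph{equivariantly} contractible to a $U_{(A,\sigma)}$-fixed point.

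First, I would observe that $A^\sigma_+$ is a convex cone (as recorded in the remark following Definition~\ref{Herm_A}) containing $1\in A^\sigma$, and that $1$ is fixed by the conjugation action of $U_{(A,\sigma)}$. Hence the straight-line homotopy $h_t(a)=(1-t)a+t\cdot 1$ remains in $A^\sigma_+$ by convexity, satisfies $h_0=\Id$ and $h_1\equiv 1$, and commutes with conjugation because conjugation is $\R$-linear and fixes $1$. Applying $h$ coordinate-wise to the factor $(A^\sigma_+)^{p_e-3\chi(S)}$, while keeping the $U_{(A,\sigma)}^{1-\chi(S)}$ coordinates unchanged, produces a $U_{(A,\sigma)}$-equivariant strong deformation retraction of the total space onto $\{(1,\dots,1)\}\times U_{(A,\sigma)}^{1-\chi(S)}$.

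Since $U_{(A,\sigma)}$ is compact, this equivariant retraction descends to a deformation retraction of orbit spaces. Combined with Theorem~\ref{main_thm} and the isomorphism $U_{(A,\sigma)}\cong\K$, this yields
$$\Rep_+^{\fr}(\pi_1(S),G)\;\simeq\;U_{(A,\sigma)}^{1-\chi(S)}/U_{(A,\sigma)}\;\cong\;\K^{1-\chi(S)}/\K,$$
establishing the first assertion.

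For the component count, I would argue that the conjugation action of $\K$ on $\K^{1-\chi(S)}$ preserves connected components: given $x=(x_1,\dots,x_{1-\chi(S)})\in\K^{1-\chi(S)}$, the orbit map $\K\to\K^{1-\chi(S)}$, $y\mapsto yxy^{-1}$, is continuous and sends $e\in\K$ to $x$, so its image lies entirely in the connected component of $x$. Consequently the quotient map induces a bijection $\pi_0(\K^{1-\chi(S)}/\K)\cong\pi_0(\K^{1-\chi(S)})=\pi_0(\K)^{1-\chi(S)}$, of cardinality $k^{1-\chi(S)}$. The only (minor) subtlety is verifying that the equivariant deformation retraction descends cleanly to the quotient; this is standard for compact group actions and requires no further work.
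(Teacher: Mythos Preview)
Your argument for the homotopy equivalence is correct and matches the paper's sketch: use a $U_{(A,\sigma)}$-equivariant contraction of the convex cone $A^\sigma_+$ to a fixed point. The paper only refers to \cite{AGRW} for the existence of such a contraction; you make it explicit via the straight-line homotopy to $1$, which is a nice touch.

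Your component-count argument, however, has a genuine gap. You claim the orbit map $\K\to\K^{1-\chi(S)}$, $y\mapsto yxy^{-1}$, has image in the connected component of $x$ because it is continuous and sends $e$ to $x$. That inference is valid only when the \emph{domain} $\K$ is connected, and $\K\cong U_{(A,\sigma)}$ is typically not: for $A=\Mat(n,\R)$ one has $U_{(A,\sigma)}=\OO(n)$, which is precisely the case where the corollary has content. In general, $\pi_0(\K^{1-\chi(S)}/\K)$ is the set of orbits of the diagonal conjugation action of $\pi_0(\K)$ on $\pi_0(\K)^{1-\chi(S)}$; this has cardinality $k^{1-\chi(S)}$ if and only if the finite group $\pi_0(\K)=\K/\K_0$ is abelian. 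That does hold here---for a Hermitian algebra $U_{(A,\sigma)}$ is a product of groups $\OO(n_i)$, $\UU(n_j)$, $\Sp(n_k)$, so $\pi_0(U_{(A,\sigma)})$ is an elementary abelian $2$-group---but this is an extra input that your argument does not supply. Without it, the conclusion $k^{1-\chi(S)}$ does not follow.
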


For the proof of this result we refer to~\cite[Section~7]{AGRW} where it is proven for $G=\Sp(2n,\R)$. The proof for $G=\Sp_2(A,\sigma)$ with $(A,\sigma)$ Hermitian is essentially the same. The idea of the proof is to use the fact that $A^\sigma_+$ is a cone, so it is contractible. In~\cite{AGRW} is shown that there is a contraction that is invariant under the action of $U_{(A,\sigma)}$ on $A^\sigma_+$ by conjugation.

\subsection{Connected components of the space of maximal representations}\label{sec:connected_components}

In this section, we study the space of maximal non-framed representations. We assume that $S$ is a punctured surface without boundary and $G=\Sp_2(A,\sigma)$ where $(A,\sigma)$ is a Hermitian algebra. Using the results for framed representations, one can show that the spaces of maximal framed and non-framed maximal representations has the same number of connected components. Further we provide examples and count connected components of spaces of maximal representations for classical Hermitian Lie groups of tube type.

Let $\Rep^{\fr}(\pi_1(S),G)\to \Rep(\pi_1(S),G)$ be the natural projection. We call this projection the \defin{framing forgetting map}. As we have seen in Proposition~\ref{max.prop}, $\Rep_+(\pi_1(S),G)$ is the image of $\Rep_+^{\fr}(\pi_1(S),G)$ under the framing forgetting map.

\begin{teo}
The framing forgetting map $\Rep_+^{\fr}(\pi_1(S),G)\to\Rep_+(\pi_1(S),G)$ induces a bijection between $\pi_0(\Rep_+^{\fr}(\pi_1(S),G))$ and $\pi_0(\Rep_+(\pi_1(S),G))$.
\end{teo}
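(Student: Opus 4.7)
The plan is to show that the forgetting map
$$\pi \colon \Rep^{\fr}_+(\pi_1(S),G) \longrightarrow \Rep_+(\pi_1(S),G)$$
is continuous, surjective, and has connected fibers. Once this is established, the bijection on $\pi_0$ follows from a general topological fact: if $\pi \colon X \to Y$ is continuous with connected fibers and $X_0$ is a connected component of $X$, then every point of $\pi^{-1}(\pi(X_0))$ shares a fiber with some point of $X_0$, and connectedness of the fiber forces it to lie in $X_0$. Thus distinct components of $X$ have disjoint images in $Y$, and $\pi$ is injective on $\pi_0$; surjectivity on $\pi_0$ is immediate from surjectivity of $\pi$ on points.

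Continuity of $\pi$ is clear from the definitions. Surjectivity is the content of the third bullet of Proposition~\ref{max.prop}: every maximal homomorphism $\rho$ admits a maximal framing $F$, so that $(\rho,F)$ is a preimage of $[\rho]$.

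The core step is connectedness of the fibers. Fix a maximal representative $\rho$. The fiber over $[\rho]$ is the quotient $\mathcal F_\rho / Z_G(\rho)$, where $\mathcal F_\rho$ is the space of maximal $\rho$-equivariant framings $F \colon \tilde P \to \F$ and $Z_G(\rho)$ is the centralizer of the image of $\rho$ in $G$. A $\rho$-equivariant framing is determined by a choice of isotropic line at one lift of each puncture $p \in P$, subject to being fixed by the peripheral holonomy $\rho(\gamma_p)$ and to the maximality condition on all resulting triples. Connectedness of the fiber thus reduces to the statement that, modulo the centralizer action, the space of admissible tuples of fixed isotropic lines is connected.

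The main obstacle is this last connectedness statement. It rests on an analysis of the dynamics of the peripheral elements $\rho(\gamma_p)$ on $\F$: for a maximal representation into a Hermitian Lie group of tube type, each such element preserves a distinguished connected subset of isotropic lines along which a maximal framing is permitted, from which connectedness of $\mathcal F_\rho$ (and hence of the fiber) follows. This is carried out for $G = \Sp(2n,\RR)$ in~\cite[Section~7]{AGRW} via a classification of the peripheral holonomies of maximal representations; the argument extends to general Hermitian $(A,\sigma)$ using the description of $\Sp_2(A,\sigma)$-actions on $\F = \PP(\Is(\omega))$ developed in Section~\ref{sec:is_islines} together with the structural results of~\cite{ABRRW}.
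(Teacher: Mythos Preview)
Your general topological claim is false: continuity, surjectivity, and connectedness of fibers do \emph{not} imply that $\pi$ induces a bijection on $\pi_0$. Take $X = [0,1] \sqcup (1,2]$ (disjoint union) mapping to $Y = [0,2]$ by inclusion on each piece. This map is continuous, surjective, and has singleton (hence connected) fibers, yet $|\pi_0(X)| = 2$ while $|\pi_0(Y)| = 1$. Your argument correctly shows $\pi^{-1}(\pi(X_0)) = X_0$, so that distinct components of $X$ have disjoint images in $Y$; but disjoint subsets of $Y$ can perfectly well lie in the same component of $Y$, and the step ``and $\pi$ is injective on $\pi_0$'' simply does not follow. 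To make this kind of argument work one would need something extra --- that $\pi$ is open, closed, or a quotient map --- none of which you have established for the forgetting map.

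The paper closes this gap by a different route: the key ingredient is the \emph{path lifting property} of the forgetting map, established in \cite[Chapter~7]{AGRW} for $\Sp(2n,\R)$ and in \cite{GRW} in general. Given two maximal representations in the same component downstairs, one lifts a connecting path to $\Rep^{\fr}_+(\pi_1(S),G)$ starting from any chosen framed lift; combined with connectedness of the fibers this places all framed lifts of both endpoints in a single component upstairs, giving injectivity on $\pi_0$. Your discussion of fiber connectedness via the dynamics of peripheral holonomies is on the right track and is indeed part of the full argument in \cite{AGRW}, but on its own it is insufficient --- path lifting is the missing, and decisive, ingredient.
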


This theorem is proven in~\cite[Chapter~7]{AGRW} for maximal symplectic representations and in~\cite{GRW} for positive representations. The key ingredient of this proof is the path lifting property of the framing forgetting map restricted to $\Rep_+^{\fr}(\pi_1(S),G)$. One uses it to show that if two points are in the same connected component in $\Rep_+(\pi_1(S),G)$ then their preimages are in the same connected component in  $\Rep_+^{\fr}(\pi_1(S),G)$.

Now we apply this theorem for classical Hermitian Lie group that can be seen as $\Sp_2(A,\sigma)$.

\begin{itemize}
    \item Let $A=\Mat(n,\R)$ and $\sigma$ be the transposition. Then $A^\sigma_+=\Sym^+(n,\R)$, $U_{(A,\sigma)}=\OO(n)$ and $\Sp_2(A,\sigma)=\Sp(2n,\R)$ (see Example~(1) in Section~\ref{ex:groups}). We obtain that $\Rep_+^{\fr}(\pi_1(S),\Sp(2n,\R))$ is homeomorphic to
        $$(\Sym^+(n,\R)^{-3\chi(S)}\times\OO(n)^{1-\chi(S)})/\OO(n)$$
        Since $\OO(n)$ has two connected components, the number of connected components of $\Rep_+^{\fr}(\pi_1(S),\Sp(2n,\R))$ and of $\Rep_+(\pi_1(S),\Sp(2n,\R))$ is $2^{1-\chi(S)}$.
    \item Let $A=\Mat(n,\CC)$ and $\sigma$ be the transposition composed with the complex conjugation. Then $A^\sigma_+=\Herm^+(n,\CC)$, $U_{(A,\sigma)}=\UU(n)$ and $\Sp_2(A,\sigma)=\UU(n,n)$ (see Example~(2) in Section~\ref{ex:groups}). We obtain that $\Rep_+^{\fr}(\pi_1(S),\UU(n,n))$ is homeomorphic to
        $$(\Herm^+(n,\CC)^{-3\chi(S)}\times\UU(n)^{1-\chi(S)})/\UU(n)$$
        Since $\UU(n)$ is connected, the spaces $\Rep_+^{\fr}(\pi_1(S),\UU(n,n))$ and $\Rep_+(\pi_1(S),\UU(n,n))$ are connected.
    \item Let $A=\Mat(n,\HH)$ and $\sigma$ be the transposition composed with the quaternionic conjugation. Then $A^\sigma_+=\Herm^+(n,\HH)$, $U_{(A,\sigma)}=\Sp(n)$ and $\Sp_2(A,\sigma)=\SO^*(4n)$ (see Example~(3) in Section~\ref{ex:groups}). We obtain that $\Rep_+^{\fr}(\pi_1(S),\SO^*(4n))$ is homeomorphic to
        $$(\Herm^+(n,\HH)^{-3\chi(S)}\times\Sp(n)^{1-\chi(S)})/\Sp(n)$$
        Since $\Sp(n)$ is connected, the spaces $\Rep_+^{\fr}(\pi_1(S),\SO^*(4n))$ and $\Rep_+(\pi_1(S),\SO^*(4n))$ are connected.
\end{itemize}

We complete this description of spaces of maximal representations with results that cannot by obtained using the technics provided in this paper. For more details we refer to~\cite{GRW}.

\begin{itemize}
    \item If $G=\SO_0(2,n)$ the identity component of the indefinite orthogonal group $\SO(2,n)$, then the space $\Rep_+^{\fr}(\pi_1(S),\SO_0(2,n))$ is homeomorphic to
        $$(\Omega^{-3\chi(S)}\times \OO(n-1)^{1-\chi(S)})/\OO(n-1),$$
        where $\Omega$ is some open proper convex cone in $\R^n$. Since the group $\OO(n-1)$ has two connected components, the number of connected components of $\Rep_+^{\fr}(\pi_1(S),\SO_0(2,n))$ and $\Rep_+(\pi_1(S),\SO_0(2,n))$ is $2^{1-\chi(S)}$.
    \item If $G=E_{7(-25)}$, then the space $\Rep_+^{\fr}(\pi_1(S),E_{7(-25)})$ is homeomorphic to
        $$(\Herm^+(3,\Oc)^{-3\chi(S)}\times F_4^{1-\chi(S)})/F_4,$$
        where $\Herm^+(3,\Oc)$ is the space of octonionic Hermitian $3\times 3$-matrices, $F_4$ is the compact from of the complex exceptional group $F_4$. Since the group $F_4$ is connected, the spaces $\Rep_+^{\fr}(\pi_1(S),E_{7(-25)})$ and $\Rep_+(\pi_1(S),E_{7(-25)})$ are connected.
\end{itemize}

\newpage
\bibliographystyle{plain}
\bibliography{bibl}

\end{document}